\documentclass[12pt]{amsart}
\usepackage{amsthm}
\usepackage{amsfonts}
\usepackage{comment}
\usepackage{amssymb}
\usepackage{comment}
\usepackage{hyperref}
\usepackage{amsmath}
\usepackage{mathtools}
\usepackage{tikz-cd}
\usepackage[margin=1in]{geometry}

\numberwithin{equation}{section}
\usepackage{breqn}
\usepackage{enumitem}
\usepackage{setspace}

\newtheorem{lemma}{Lemma}[section]
\newtheorem{theorem}[lemma]{Theorem}
\newtheorem{prop}[lemma]{Proposition}

\newtheorem{claim*}{Claim}

\theoremstyle{definition}
\newtheorem{remark}[lemma]{Remark}

\newtheorem{defn}[lemma]{Definition}

\usepackage{parskip}
\usepackage{bm}

\newcommand{\F}{\mathbb{F}}

\newcommand{\A}{\mathbb{A}}
\newcommand{\calC}{\mathcal{C}}
\newcommand{\calS}{\mathcal{S}}
\newcommand{\calZ}{\mathcal{Z}}

\newcommand{\Fph}{\F_{p^h}}
\newcommand{\Zgamma}{\calZ_\gamma}

\DeclareMathOperator{\Tr}{Tr}
\DeclareMathOperator{\N}{N}

\DeclareMathOperator{\supp}{Supp}
\usepackage{xcolor}

\title{Codes with Hierarchical Locality on Artin-Schreier Surfaces}
\author{Jennifer Berg}
\address{Jennifer Berg, Bucknell University, Department of Mathematics, Lewisburg, PA 17837, USA}
\email{jsb047@bucknell.edu}
\author{Beth Malmskog}
\address{Beth Malmskog, Colorado College, Department of Mathematics and Computer Science, Colorado Springs, CO 80903, USA}
\email{bmalmskog@coloradocollege.edu}
\author{Mckenzie West}
\address{Mckenzie West, University of Wisconsin-Eau Claire, Department of Mathematics, Eau Claire, WI 54701, USA}
\email{westmr@uwec.edu} 

\begin{document}
\maketitle
\begin{abstract}
 In this article, we construct codes with hierarchical locality using natural geometric structures in Artin-Schreier surfaces of the form $y^p-y=f(x,z)$.  Our main theorem describes the codes, their hierarchical structure and recovery algorithms, and gives parameters.  We also develop a family of examples using codes defined over $\F_{p^2}$ on the surface $y^p-y=x^{p+1}z^2+x^2z^{p+1}$.  We use elementary methods to count the $\F_{p^2}$-rational points on the surface, enabling us to provide explicit hierarchical parameters and a better bound on minimum distance for these codes.  An additional example and some generalizations are also considered.
\end{abstract}

\section{Introduction}\label{sec:Introduction}

Error-correcting codes are mathematical objects that encode information with redundancy so that the correct original data is available even when errors or erasures occur.  Encoded data is stored or transmitted in vectors, known as codewords.  In a locally recoverable code, the information in each position of any codeword can be recovered by accessing a relatively small subset of other positions, known as the recovery set for that position.  This property is particularly desirable for cloud storage applications, where a single codeword would be spread out across many servers, with each server storing a single position.  If a server crashes or becomes unavailable because of high demand, a locally recoverable code allows the information on that server to be reconstructed from a small set of other servers.  Since multiple servers could become unavailable, possibly compromising a server's recovery set, it is desirable to have an efficient back-up plan.  Of course, one option is to simply use the whole codeword to recover the missing data by traditional error correction.  However, this would require accessing all the other symbols in the codeword, which would potentially mean querying a very large number of servers.  Two other ideas to address this issue more efficiently are availability and hierarchy \cite{barg2017locally, sasidharan2015codes}.  A code is said to have availability $t$ if each position has $t$ independent recovery sets.  Thus if one recovery set is unavailable, another can be used.  A code is said to have hierarchical locality if each position has at least two nested recovery sets, with the larger recovery set of larger minimum distance, enabling recovery of more erasures than the smaller recovery set.  Thus, if some positions in the smaller set are not available, the larger set can recover all of the missing data.  This paper focuses on codes with hierarchical locality.

Algebraic geometric code constructions have been rich sources of locally recoverable codes, and further of those with availability and hierarchy \cite{barg2017locally, ballentine2019codes, barg2017locallyrecoverable, lopez2021hermitian, salgado2021, haymaker2023algebraic}. Underlying algebraic and geometric structures of algebraic varieties can give rise to useful structures in evaluation codes constructed on those varieties.  In this paper, we construct codes with hierarchical locality by exploiting the algebraic geometric structure of \emph{Artin-Schreier surfaces}.  Such a surface is defined over a field of prime characteristic $p$, by an affine equation of the form $y^p-y=f(x,z)$ for $f(x,z)$ a rational function.  The reason for focus on these surfaces is that, when intersected with planes of constant $z$, the result is often an Artin-Schreier curve of the form $y^p-y=\tilde{f}(x)$ for a non-constant function $\tilde{f}$.  While Artin-Schreier curves have been widely studied for both theoretical interest and applications to coding theory, less is known about the higher-dimensional analogues.  We define evaluation codes on Artin-Schreier surfaces with small recovery sets arising from fibers of constant $x$ and $z$, while larger recovery sets arise from points on Artin-Schreier curves at the intersection of the surface with planes of constant $z$.  A main feature of interest in our code construction is its novel use of geometry and fibrations of surfaces into curves to provide locality and hierarchy for codes. This is a proof of concept for an approach that can potentially be applied to general surfaces, creating a vast array of examples of codes with hierarchical recovery.  A second feature of interest in this work is in the point counting for the Artin-Schreier surfaces in our examples.  Understanding the number and structure of points on these surfaces is of independent value, and contributes to a relatively small body of knowledge on point counts of such surfaces.  

The structure of this paper is as follows.  In Section \ref{sec:Background}, we briefly describe the objects of our study and some useful known results.  The main theorem of this paper is Theorem \ref{thm: main}, where we define evaluation codes on affine Artin-Schreier surfaces and, given some conditions on parameter choices and point counts on curves contained in the surfaces, we show that these codes will have hierarchical locality and determine hierarchical parameters of the codes. Local recovery using the smaller recovery set is by straightforward single-variable polynomial interpolation. Recovery by the larger set requires a specialized multivariate polynomial interpolation algorithm, which we describe in the proof of Theorem \ref{thm: main}. This theorem and the proof are the subject of Section \ref{sec:main theorem}.  Because this theorem is stated and proved in generality, it is not possible to give a non-trivial bound on the minimum distance of the codes at this level. However, in Section \ref{sec:Example}, we develop an example of this construction in detail and determine a more meaningful minimum distance bound.  A related example is described in Section \ref{sec:RelatedExample}.  In Section \ref{sec:Conclusion}, we briefly discuss potential next directions for investigation.

\subsection{Dedication}

This paper is dedicated with respect and affection to Professor Sudhir Ghorpade.  His work weaves together the beauty of algebra, geometry, number theory, and coding theory.  Our work in this paper relies on a perspective that he helped to develop, and that he continues to expand.  We also want to pay tribute to Professor Ghorpade's joyful dedication to teaching, mentoring young researchers, and writing about mathematics.  His warm and inclusive way of interacting with others has made the mathematical community better for all of us.  Thank you, Sudhir!

\subsection{Acknowledgements}

This work began thanks to travel supported by the University of Wisconsin-Eau Claire Department of Mathematics. 
Author Malmskog was partially supported by the National Science Foundation through award number 2137661.
Author West was partially supported by the University of Wisconsin-Eau Claire Vicki Lord Larson and James Larson Tenure-track Time Reassignment Collaborative Research Program.

\section{Background}\label{sec:Background}

\subsection{Notation} Let $\mathbb{Z}^+$ denote the set of positive integers.  Let $q$ be a prime power, and denote the field with $q$ elements by $\F_q$.  For $n\in\mathbb{Z}^+$, let $[n]\coloneqq\{1,2,\dots,n\}$.  We will often consider curves and surfaces embedded in $3$-space.  We use $\mathbb{A}^3$ to denote affine $3$-space over a field $K$ that will be clear from context, i.e. $\mathbb{A}^3=\{(x,y,z):x,y,z \in K\}$.  Throughout this paper, we will refer to projection maps $\pi$ with subscripts indicating the coordinate onto which we are projecting.
For example, for $(a,b,c)\in\mathbb{A}^3$, define
\begin{equation}
    \pi_x\colon \mathbb{A}^3\to \mathbb{A}^1\quad \text{by}\quad \pi_x(a,b,c)=a,
\end{equation}
and 
\begin{equation}
    \pi_{x,z}\colon \mathbb{A}^3\to \mathbb{A}^2\quad \text{by}\quad \pi_x(a,b,c)=(a,c).
\end{equation} 

\subsection{Codes}\label{subsec:BackgroundEvalCodes}

Let $n,k,d\in\mathbb{Z}^+$. A \emph{linear code} $C$ of \emph{length} $n$ and \emph{dimension} $k$ is a $k$-dimensional subspace of $\F_q^n$.  
Elements of $C$ are called \emph{codewords}, and the distance between two codewords is the number of positions in which they differ.
The minimum distance of $C$ is the minimal distance between any two codewords.  We refer to a code with these parameters as an $[n,k,d]$-code.  Given an $[n,k,d]$-code $C$, the \emph{punctured code} to positions in $I=\{i_1,i_2,\dots, i_s\}\subset[n]$ is $C|_I\coloneqq\{(c_{i_1}, c_{i_2},\dots c_{i_s}):(c_1,c_2,\dots,c_n)\in C\}$.  Equivalently, the punctured code can be thought of as the image of $C$ under the projection of $\A^n$ onto $\A^{s}$, where we define
\[\pi_I\colon\A^n\to \A^{s} \quad\text{by}\quad \pi_I((c_1,c_2,\dots,c_n))=(c_{i_1}, c_{i_2},\dots c_{i_s}).\]

Given an ordered finite set $A=\{a_1,a_2,\dots a_n\}$ of cardinality $n$, and $V$ a set of functions from $A$ to $\F_q$, we define the evaluation on $A$ map by \[ev_A:V\rightarrow \F_q^n, \quad ev_A(f)=\left(f(a_1), f(a_2),\dots, f(a_n)\right).\]  If $V$ is an $\F_q$-vector space, then so is $ev_A(V)\coloneqq \{ev_A(f):f\in V\}$, thus $ev_A(V)$ is a linear code. Note that in this case, if $ev_A$ is injective on $V$, then the dimension of $ev_A(V)$ equals $\dim(V)$.

Evaluation codes using sets of points on an algebraic variety and functions defined on these points are often called algebraic geometry (AG) codes.  Generalizations of Reed-Solomon codes, the first AG codes on curves were devised by Goppa in 1982.  AG codes on higher dimensional varieties followed, generalizing $q$-ary Reed-Muller codes. In our setting, let $\calS$ be a surface defined over a finite field $\mathbb{F}_q$ and let $T\subseteq\calS(\F_q)$.  If $\#T=n$, then we arbitrarily order $T$ as $T=\{P_1, P_2,\dots, P_n\}$. 
    Let $V$ be a linear subspace of the function field $\F_q(\calS)$ such that no function in $V$ has poles at any point in $T$. 
    The evaluation code $C(V,T)$ is the vector space
    \[ C(V,T)\coloneqq ev_T(V)=\{ev_T(f):f\in V\}.\]

Codes on algebraic surfaces were introduced by Aubry in his 1993 PhD thesis \cite{AubryThesis}. Our evaluation codes are in the tradition of this work, which focused on the restriction of projective Reed-Muller codes to the points of algebraic surfaces.  In this paper, we restrict strategically-defined subcodes of affine Reed-Muller codes to the affine points of algebraic surfaces.  

Interest in the arithmetic geometry of surfaces and related properties of AG codes on surfaces has grown, leading to exciting progress in the area.  A major part of finding the parameters of such codes is counting the points on the surfaces and on related curves.  In a nice example of this, Couvreur defined good codes on rational surfaces obtained by blowing up the projective plane at a small number of closed points, with the estimate of minimum distance becoming equivalent to counting points on related plane curves \cite{couvreur2011}.  While a survey of all the good work in this area is beyond the scope of this paper, we note some interesting recent results include lower bounds for minimum distance of particular families of surface codes \cite{Aubry2021}, codes on Hirzebruch surfaces \cite{Nardi2019}, good codes from del Pezzo surfaces \cite{blache2020anticanonical}, codes on abelian surfaces \cite{aubry2021algebraic}, and a method for bounding minimum distance on codes from algebraic surfaces with connections to codes on towers of surfaces \cite{couvreur2020toward}.

\subsection{Local and Hierarchical Recovery}\label{subsec:BackgroundHierarchy}

As mentioned in Section \ref{sec:Introduction}, applications such as cloud storage can make it desirable for codes to have additional structure.  Locally recoverable codes (LRCs) were introduced in 2012 by Gopalan, Huang, Semitci, and Yekhanin \cite{Gopalan2012}.  The idea was to design codes so that a single erased symbol could be recovered using only a small number of other symbols, instead of requiring access to at least $k$ codeword symbols (as might be necessary for standard error correction).  LRCs have been studied by many researchers since and were generalized to allow the recovery of more than one erasure, as in the following definition.

\begin{defn} \cite{Prakash}\label{LRCdef2} Let $n, r, \rho \in \mathbb{Z}^+$. A linear code $C$ of length $n$ over $\mathbb{F}_q$ is \emph{$(r,\rho)$-locally recoverable} if for each $1\leq i\leq n$ there exists a punctured code $C_i$ of length  $r+\rho-1$ and minimum distance at least $\rho$ such that the support of $C_i$ contains $i$. The support of $C_i$ is called a \emph{repair group} for position $i$, and $I_i \coloneqq \supp C_i \setminus \{i\}$ is called a \emph{recovery set} for $i$.
\end{defn}

Locally recoverable codes from covering maps of curves were introduced in \cite{barg2017locally}, and further locally recoverable codes from curves and surfaces were described in \cite{barg2017locallyrecoverable}.  Further interesting results on locally recoverable codes on surfaces appear in \cite{salgado2021}. 

An $(r,\rho)$-LRC has the property that any $\rho-1$ erasures within a repair group can be recovered by accessing the remaining symbols of the group; thus the single erasure version corresponds to $\rho=2$.  However, if there are more than $\rho-1$ erasures, or if more than $\rho-1$ symbols are unavailable due to high demand, it is desirable to have a back-up plan to recover these symbols without accessing all remaining symbols of the codeword.  One version of this back-up plan, advanced by Sasidharan, Agarwal, and Kumar in 2015, involves designing codes where the small repair group is nested inside a larger repair group with a larger minimum distance. 

\begin{defn} [\cite{sasidharan2015codes}]\label{HLRCdef}
Let $n, n_1, n_2, s_1, s_2, d_1, d_2\in\mathbb{Z}^+$ with $n_2<n_1$, $s_2\leq s_1$, and  $d_2<d_1$.  A linear code $C$ of length $n$ is said to have \textit{hierarchical locality} with parameters $((n_1, s_1, d_1), (n_2,  s_2, d_2))$ if for each $1\leq i\leq n$ there exists a punctured code $C_{1,i}$ of length $n_1$, dim$(C_{1,i})\leq s_1$, and minimum distance $d(C_{1,i})\geq d_1$ with $i$ in the support of $C_{1,i}$, and such that $C_{1,i}$ is an $ (s_2,d_2)$-locally recoverable code of length $n_2$.  Let $C_{2,i}$ be the local repair group of $C_{1,i}$ with support containing position $i$.
\end{defn}

We call the code $C_{1,i}$ the \emph{middle code} and $C_{2,i}$ the \emph{lower code} for position $i$.  Note that our notation varies slightly from \cite{sasidharan2015codes}, because we would like to be able to refer explicitly to the lower code in our construction.  Codes with hierarchical locality are referred to as HLRCs.  HLRCs from algebraic geometry constructions were considered by Ballentine, Barg, and Vl{\u{a}}du{\c{t}} \cite{ballentine2019codes} using covering maps, quotients, and fiber products of curves. Recent work of Haymaker, Malmskog, and Matthews describes hierarchical locality for Reed-Muller codes as well as fiber product and curve cover constructions \cite{haymaker2023algebraic}. In \cite{haymaker2023algebraic}, the affine subspaces are used to define middle codes for hierarchical recovery of Reed-Muller codes.  This work builds on \cite{haymaker2023algebraic} in the sense that we use the intersection of affine subspaces with a surface in 3-space to define middle codes for hierarchical recovery.  

\subsection{Arithmetic in Characteristic $p$ and Artin-Schreier Covers}\label{subsec:BackgroundAS}

Let $h\in\mathbb{Z}^+$.  The field trace map $\Tr_{\F_{q^h}/\F_{q}}:\F_{q^h}\rightarrow\F_{q}$ is defined by\[\Tr_{\F_{q^h}/\F_{q}}(x)\coloneqq \sum_{i=0}^{h-1} x^{q^i}.\]   This map is a surjective homomorphism of the additive groups of these fields and is thus a $q^{h-1}$-to-one map.  The field norm map $\N_{\F_{q^h}/\F_{q}} \colon \F_{q^h}\rightarrow\F_{q}$ is defined by \[\N_{\F_{q^h}/\F_{q}}(x)\coloneqq \prod_{i=0}^{h-1} x^{q^i}.\] The norm map is a surjective homomorphism of the multiplicative groups of the fields, and is $(q^{h-1}+1)$-to-one map except at $0$, where it is one-to-one. Note that field norm and trace maps exist for all finite-degree field extensions, not only in characteristic $p$.

An Artin-Schreier extension is any degree-$p$ Galois extension of a field of characteristic $p$.  A good reference on basic Artin-Schreier theory is \cite[Section 3.7]{stichtenoth2009algebraic}.  If $F$ is a field of characteristic $p$, and $u\in F$ such that $u\neq w^p-w$ for any $w\in F$, and $y$ satisfies $y^p-y=u$, then $F(y)$ is an Artin-Schreier extension of $F$.  If $F$ is the function field of a variety $\mathcal{X}$, and $F(y)$ is the function field of a variety $\mathcal{Y}$, we refer to the induced map $\mathcal{Y} \rightarrow \mathcal{X}$ as an Artin-Schreier cover.  If $\mathcal{X}$ is the projective line in $x$, the curve $\mathcal{Y}$ is defined by an equation of the form $y^p-y=f(x)$ for $f(x)$ a rational function and is called an Artin-Schreier curve.  Note that Artin-Schreier curves have an order-$p$ automorphism defined by $\alpha(x,y)=(x+1,y)$.  We then have that $\mathcal{X}=\mathcal{Y}/\langle\alpha\rangle$.

Artin-Schreier curves have been well-studied for theoretical interest as degree-$p$ Galois covers of the projective line in characteristic $p$.  For example, in characteristic $2$, hyperelliptic curves are Artin-Schreier curves.  These curves have also attracted notice in coding theory.  One reason for this is that the form of Artin-Schreier curves makes it possible to count the points for curves in some families using number theory and geometry.  Also, certain families have been found to contain curves with many points relative to the Weil bound.  Curves with many points are desirable because these can lead to codes with parameters that can be bounded using geometry, but which are relatively long relative to the size of their field of definition. Other connections also exist between Artin-Schreier curves and codes, particularly in determining the weight distribution of codes through counting points on Artin-Schreier curves, see  \cite{VANDERGEER1991256}.

An Artin-Schreier surface is defined over a field $F$ of characteristic $p$ by a equation of the form $y^p-y=f(x,z)$, where $f(x,z)$  is a rational function.  Note that in this paper, we focus on the affine points of such surfaces.  Artin-Schreier surfaces or hypersurfaces are less well-understood. Interesting recent work counts points on some families of Artin-Schreier surfaces \cite{ASpointcounts}, and point counts of Artin-Schreier hypersurfaces were used to determine weights and minimum distance bounds in \cite{AScycliccodes}.  One nice feature of our work here is counting the points on curves in two such families in Sections \ref{sec:Example} and \ref{sec:RelatedExample}.

One important result that we will need to count these points is Hilbert's Theorem 90.  

\begin{theorem}[Hilbert's Theorem 90] Let $L/K$ is an extension of fields with cyclic Galois group generated by $\sigma$, and let $\N_{L/K}$ be the field norm from $L$ to $K$. If $a\in L$, then $\N_{L/K}(a)=1$ exactly when there exists $b\in L$ with $a=\frac{b}{\sigma(b)}$.  
\end{theorem}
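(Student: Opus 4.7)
The plan is to prove the two directions separately, treating the cyclic Galois group as finite of order $n=[L:K]$ generated by $\sigma$. First I would dispatch the easy direction: if $a=b/\sigma(b)$ with $b\in L^\times$, then
\[\N_{L/K}(a)=\prod_{i=0}^{n-1}\sigma^i\!\bigl(b/\sigma(b)\bigr)=\prod_{i=0}^{n-1}\frac{\sigma^i(b)}{\sigma^{i+1}(b)},\]
which telescopes to $b/\sigma^n(b)=1$ since $\sigma^n$ is the identity. Only the reverse implication is substantive.

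For the reverse direction, given $a\in L^\times$ with $\N_{L/K}(a)=1$, the plan is to construct $b$ as a ``twisted Lagrange resolvent.'' For any $c\in L$, set
\[b(c)\;\coloneqq\;c+a\,\sigma(c)+a\sigma(a)\,\sigma^2(c)+\cdots+a\sigma(a)\sigma^2(a)\cdots\sigma^{n-2}(a)\,\sigma^{n-1}(c).\]
The crucial input is Dedekind's lemma on linear independence of characters: the distinct automorphisms $1,\sigma,\sigma^2,\ldots,\sigma^{n-1}$ of $L$ are $L$-linearly independent as functions $L^\times\to L^\times$. Since the coefficients $1,\,a,\,a\sigma(a),\,\ldots$ attached to $\sigma^0,\sigma^1,\ldots,\sigma^{n-1}$ are all nonzero, the map $c\mapsto b(c)$ is not identically zero, so I can choose $c_0\in L$ with $b\coloneqq b(c_0)\neq 0$.

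The finishing step is a direct computation verifying $a\,\sigma(b)=b$. Applying $\sigma$ term by term,
\[a\,\sigma(b)=a\,\sigma(c_0)+a\sigma(a)\,\sigma^2(c_0)+\cdots+a\sigma(a)\sigma^2(a)\cdots\sigma^{n-1}(a)\,\sigma^n(c_0).\]
Using $\sigma^n(c_0)=c_0$ together with the hypothesis
\[a\sigma(a)\sigma^2(a)\cdots\sigma^{n-1}(a)=\N_{L/K}(a)=1,\]
the last summand becomes $c_0$, and the remaining summands match the first $n-1$ terms of $b$. Hence $a\sigma(b)=b$, so $a=b/\sigma(b)$ as required.

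The only real obstacle is the nonvanishing of $b(c_0)$: without Dedekind's independence of characters one could not rule out that $b(c)$ collapses to zero on all of $L$. Everything else is algebraic bookkeeping that relies on nothing more than $\sigma^n=\mathrm{id}$ and the norm hypothesis. If a reader prefers a more cohomological framing, one could alternatively cite $H^1(\mathrm{Gal}(L/K),L^\times)=0$, but the resolvent argument above is self-contained and fits the elementary tone used elsewhere in the background section.
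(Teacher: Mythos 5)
Your argument is correct and is the classical proof of multiplicative Hilbert 90 via the twisted Lagrange resolvent together with Dedekind--Artin linear independence of characters. The paper, however, does not prove this statement at all: it records Hilbert's Theorem 90 as a known background result (in Section~\ref{subsec:BackgroundAS}) and immediately specializes it to $\F_{p^2}/\F_p$ for use in the point counts, so there is no ``paper proof'' to compare against.

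A few small observations on your write-up. The direction that costs something is handled cleanly, and you correctly identify the nonvanishing of $b(c_0)$ as the only nontrivial input; without Dedekind's lemma the resolvent could in principle vanish identically. You implicitly pass from ``$a\in L$'' to ``$a\in L^\times$,'' which is harmless since $\N_{L/K}(a)=1$ forces $a\neq 0$, but it is worth stating. Your closing remark about the cohomological reformulation $H^1(\mathrm{Gal}(L/K),L^\times)=0$ is accurate and is the standard generalization; the resolvent argument you give is exactly its unwound, self-contained form, which matches the elementary tone of the paper's background section. Since the paper only needs the finite-field case (and, in fact, mainly the additive variant $\Tr(a)=0\iff a=b-b^p$, which has an entirely parallel proof replacing products by sums), your proof is more than sufficient for the purposes the theorem serves here.
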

For the extension of finite fields $\F_{p^2}/\F_p$, this means that for $a\in\F_{p^2}$, $\N(a)=1$ if and only if there exists $b\in\F_{p^2}$ with $a=\frac{b}{b^{p}}$.  There is also an additive form of Hilbert's Theorem 90.  In the same finite field extension, this can be restated as: for $a\in\F_{p^2}$, $\Tr(a)=0$ if and only if there exists $b\in\F_{p^2}$ with $a=b-b^p$.

\section{Main Theorem}
\label{sec:main theorem}

Let $p$ be an odd prime and $h$ a positive integer. Let $f(x,z)\in\Fph[x,z]$ and let $\calS$ be an affine Artin-Schreier surface defined by $y^p-y=f(x,z)$. For each $\gamma \in \Fph$ such that $f(x,\gamma)$ is non-constant, define the curve
    \begin{equation}\label{eqn:calZ}
        \Zgamma\coloneqq V(y^p-y-f(x,z),z-\gamma).
    \end{equation}

In what follows, we consider evaluation codes defined using affine points of $\calS$. In subsequent sections, we shall restrict our attention to particular families of Artin-Schreier surfaces, but the results here are stated in greater generality. In particular, we place no restriction on the finite field, and \emph{a priori}, no restriction on the polynomial $f$ in the defining equation of $\calS$. However, in order to determine the parameters of the codes we consider, we must impose some geometric conditions on the surfaces $\calS$ as well as on the subset of points $T$ we take as our evaluation set.

As mentioned in the introduction, surfaces can be a natural source of evaluation codes with hierarchical recovery, in this case, by leveraging the fact that $\calS$ comes equipped with fibrations into affine Artin-Schreier curves. In our setup, the points on the curves $\calZ_\gamma$ arising as fibers of the projection map $\pi_z \colon \calS \to \A^1_z$ form the support for the middle codes in our hierarchy. We do not use all points and all fibers to form our evaluation sets, though, since this leads to codes with small rates and (relative) minimum distances. Instead, we keep only the curves $\calZ_\gamma$ that have sufficiently many rational points to guarantee we can recover many erasures. We use a set $\Gamma \subset \Fph$ to give a crude measure of how spread out rational points are on the curves $\calZ_\gamma$, namely by keeping track of those $\gamma \in \Fph$ for which the curve $\calZ_\gamma$ has rational points with many distinct $x$-coordinates. Furthermore, 
we require that our surfaces have rational points that are sufficiently spread out across all of $\calS$, in the sense that we require the size of the set $\Gamma$ to be large.

\begin{theorem} \label{thm: main}
    Let $\calS$ and $\Zgamma$ be defined as above. Let $\eta$ be a positive integer and let $\Gamma\subseteq\Fph$ denote the subset 
    \begin{equation}\label{eqn:Gamma}
        \Gamma \coloneqq \{ \gamma \in \Fph : 
        \#\pi_x(\Zgamma(\Fph))\geq \eta
        \}.
    \end{equation}

    Assume there exist non-negative integers $\nu, \rho_1, \rho_2$, and $\rho_3$ such that
    \begin{enumerate}
    \item $\deg(\mathcal{Z}_{\gamma})\leq \nu$ for all $\gamma\in\Gamma$;
    \item $2 \le \rho_1 \le \eta$, $2\leq\rho_2\leq p$, and $0\leq \rho_3\leq p^2-1$;
    \item there are at least $\rho_3 + 1$ values of $\gamma\in\Gamma$ so that $\#\Zgamma(\F_{p^h})\geq \nu(\eta - \rho_1 + p-\rho_2)+1$.
    \end{enumerate}

    Define an evaluation code $C \coloneqq C(T,V, \eta, \nu)$ with evaluation set
    \begin{equation}\label{eqn:T}
        T=\{(x,y,z)\in \calS(\Fph): z\in\Gamma\},
    \end{equation}
and vector space of polynomials
    \begin{equation}\label{eqn:V}
        V=\langle x^iy^jz^k:
        0\leq i\leq \eta-\rho_1, 
        0\leq j\leq p-\rho_2, 
        0\leq k \leq \rho_3 
        \rangle.
    \end{equation} 
    Then the code $C$ has hierarchical recovery with lower codes supported
    on the fibers of $\pi_{x,z} \colon \calS \to \A^2_{x,z}$ and middle codes supported on the curves $\calZ_\gamma$, with $d_2=\rho_2$, $d_1=\rho_1\rho_2$, $s_2=p-\rho_2+1$, $s_1=(\eta-\rho_1+1)(p-\rho_2+1)$.
    Furthermore, the dimension of the full code is $(\eta-\rho_1+1)(p-\rho_2+1)(\rho_3+1)$.
\end{theorem}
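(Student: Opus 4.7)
The plan is to establish the three hierarchical layers in turn---lower codes supported on the fibers of $\pi_{x,z}$, middle codes supported on the curves $\Zgamma$ for $\gamma \in \Gamma$, and the full code on $T$---and then verify that $ev_T$ is injective on $V$ to read off the claimed dimension.

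\textbf{Lower codes.} For a fixed pair $(x_0, \gamma) \in \Fph \times \Gamma$, the set $\pi_{x,z}^{-1}(x_0, \gamma) \cap T$ contains exactly $p$ points whenever it is nonempty, namely the $\F_p$-translates of any solution $y_0$ of $y^p - y = f(x_0, \gamma)$. Restricting $V$ to this fiber specializes $x$ and $z$, leaving polynomials in $y$ of degree at most $p - \rho_2 < p$; the resulting lower code is a length-$p$ Reed--Solomon-type code with $s_2 = p - \rho_2 + 1$ and $d_2 = \rho_2$, recoverable by single-variable Lagrange interpolation.

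\textbf{Middle codes.} Substituting $z = \gamma$ reduces $V$ to the space of polynomials in $(x,y)$ of bidegree at most $(\eta - \rho_1, p - \rho_2)$, of dimension at most $s_1 = (\eta - \rho_1 + 1)(p - \rho_2 + 1)$. For the bound $d_1 \geq \rho_1 \rho_2$, I would argue as follows: given a nonzero $h(x,y) = \sum_j P_j(x) y^j$ of this bidegree, for each $x_0 \in \pi_x(\Zgamma(\Fph))$ the univariate polynomial $h(x_0, y)$ has degree at most $p - \rho_2$, so it either vanishes identically---forcing $P_j(x_0) = 0$ for every $j$, which is possible for at most $\eta - \rho_1$ values of $x_0$ since some $P_j$ is a nonzero polynomial of degree $\leq \eta - \rho_1$---or has at most $p - \rho_2$ roots among the $p$ $y$-values in the fiber. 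Using $\#\Zgamma(\Fph) = p \cdot \#\pi_x(\Zgamma(\Fph)) \geq p\eta$ and subtracting the maximum zero count $p(\eta - \rho_1) + (p - \rho_2)(\#\pi_x(\Zgamma(\Fph)) - (\eta - \rho_1))$ leaves at least $\rho_2(\#\pi_x(\Zgamma(\Fph)) - (\eta - \rho_1)) \geq \rho_1 \rho_2$ nonzero evaluations. The repair groups of this middle code are precisely the lower-code supports at each $x_0 \in \pi_x(\Zgamma(\Fph))$, giving the nested HLRC structure. Recovery is by two-stage interpolation: reconstruct $h(x_0, y)$ from each $y$-fiber with enough surviving positions, then interpolate each $P_j(x)$ from the resulting values.

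\textbf{Injectivity and full-code dimension.} To see $\dim C = \dim V$, I would show $ev_T$ is injective on $V$. If $f = \sum_k z^k g_k(x,y) \in V$ vanishes on $T$, then for each $\gamma \in \Gamma$ the polynomial $h_\gamma := \sum_k \gamma^k g_k$ vanishes on $\Zgamma(\Fph)$. Strengthening the middle-code counting: each $h_\gamma(x_0, y)$ has degree $<p$ in $y$ with $p$ distinct roots, so it is identically zero; hence each coefficient $P_j(x)$ of $h_\gamma$ vanishes on $\pi_x(\Zgamma(\Fph))$, a set of size $\geq \eta > \deg P_j$, forcing $P_j \equiv 0$ and thus $h_\gamma \equiv 0$ in $\Fph[x,y]$. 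Finally, each coefficient $\sum_k c_{ijk} \gamma^k$ of $h_\gamma$, viewed as a polynomial in $\gamma$ of degree $\leq \rho_3$, vanishes on $\#\Gamma \geq \rho_3 + 1$ points (this lower bound on $\#\Gamma$ follows from hypothesis (3)), so all $c_{ijk} = 0$ and $f = 0$. The main technical obstacle is the two-level counting behind the middle-code minimum distance; once that case analysis is made airtight, the injectivity argument reuses it one level up, with the numerical hypotheses calibrated exactly so the telescoping closes.
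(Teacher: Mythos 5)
Your proof is correct, and the lower- and middle-code arguments track the paper's up to a change of viewpoint: the paper bounds $d_1$ by showing that its two-stage interpolation recovery can fail only when at least $\rho_1\rho_2$ positions are erased, while you bound the minimum weight of a nonzero codeword directly by zero-counting; both reduce to the same arithmetic, $\rho_2\bigl(\#\pi_x(\Zgamma(\Fph))-(\eta-\rho_1)\bigr)\geq\rho_1\rho_2$. The injectivity and dimension argument, however, is genuinely different. The paper selects $\gamma_0\in\Gamma$ with $\tilde g|_{z=\gamma_0}\not\equiv 0$ and applies Bezout's theorem to the plane curve $\calZ_{\gamma_0}$ of degree $\leq\nu$, bounding the number of zeros by $\nu(\eta-\rho_1+p-\rho_2)$ and invoking hypothesis (3) to find a $\gamma_0$ whose curve has strictly more $\Fph$-points than that bound. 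You instead peel off variables coordinate-wise by exploiting the Artin-Schreier structure: above each $x_0\in\pi_x(\Zgamma(\Fph))$ the $y$-fiber consists of all $p$ translates of a single solution by elements of $\F_p$ and hence is entirely $\Fph$-rational, so a polynomial of $y$-degree $\leq p-\rho_2<p$ vanishing there is identically zero; then $\deg_x\leq\eta-\rho_1<\eta\leq\#\pi_x(\Zgamma(\Fph))$ and $\deg_z\leq\rho_3<\rho_3+1\leq\#\Gamma$ close the argument. Your route is more elementary, uses only the consequence $\#\Gamma\geq\rho_3+1$ of hypothesis (3) rather than its full strength, does not use hypothesis (1) at all, and sidesteps the paper's unstated reliance on $\calZ_{\gamma_0}$ being irreducible for the Bezout count. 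What the Bezout route buys in exchange is a quantitative zero bound that the paper reuses in the remark immediately following to give a nontrivial lower bound on the minimum distance of the full code, which your argument does not directly supply.
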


\begin{proof} 
Consider the position $i$ of a vector in the code $C$. This position corresponds to the evaluation of the polynomials in $V$ at some fixed point $P_i \in T$. 

Let $C_{2,i}$ denote the punctured code given by retaining the positions in the code vectors corresponding to the points in the evaluation set with the same $x$- and $z$-coordinates as $P_i$.  Specifically, these are the points in the fiber 
        \begin{equation}
            F_{2,i} \coloneqq (\pi_{x,z}\mid_\calS)^{-1}(\pi_{x,z}(P_i))=\{(\pi_x(P_i),\pi_y(P_i)+\delta,\pi_z(P_i)):\delta\in\F_p\}.
        \end{equation}
    Note that here we are using the finite field property that for any $\delta \in \F_p$, then $y^p-y=(y+\delta)^p-(y+\delta)$, so we can describe the $p$ points in the fiber explicitly.    
    If $g\in V$, then $g|_{F_{2,i}}$ is a univariate polynomial in $y$ of degree at most $p-\rho_2$. Therefore, knowing the evaluation of $g$ at $p-\rho_2+1$ points in $F_{2,i}$ will allow us to interpolate $g$ across all of $F_{2,i}$. Thus we have local recovery with locality $p-(\rho_2-1)$, and any $\rho_2-1$ erasures of coordinates corresponding to points in $F_{2,i}$ can be recovered at the lower level. Therefore the minimum distance of the lower code is at least $d_2=\rho_2$ and $C_{2,i}$ is a $(s_2, d_2)$-locally recoverable code.

     Now consider the punctured code $C_{1,i}$ given by retaining the positions in the code vectors corresponding to the points in the evaluation set with the same $z$-coordinate as $P_i$. These points can be described as the points in the fiber
        \begin{equation}
            F_{1,i} \coloneqq (\pi_z\mid_\calS)^{-1}(\pi_z(P_i)),
        \end{equation} 
        or, equivalently, as the $\Fph$-points on $\mathcal{Z}_{\pi_z(P_i)}\colon y^p-y=f(x,\pi_z(P_i))$, a curve in the plane $z=\pi_z(P_i)$ in $\mathbb{A}^3$.

    We want to show that the punctured code $C_{1,i}$ has minimum distance at least $\rho_1\rho_2$.  To do so, we will determine a lower bound on the number of coordinates that uniquely determine a vector in the code.

    Let $\tilde{g}\in V$ and consider its restriction to the fiber $F_{1,i}$. Write 
        \[g(x,y):= \tilde{g}(x,y, \pi_z(P_i)) = g_0(x)+g_1(x)y+\dots g_{p-\rho_2}(x)y^{p-\rho_2},\]  
    where $g_j\in\Fph[x]$ with $\deg(g_j)\leq \eta-\rho_1$ for each $0\leq j\leq p-\rho_2$. 
    We consider the process of polynomial interpolation in the setting of this multivariate polynomial $g$.

   Let $U=\pi_x(F_{1,i})$, so $U$ is the set of $x$ values represented in $F_{1,i}$. For each $\alpha\in U$, let $W_\alpha=\pi_{x,y}(\{P \in F_{1,i} : \pi_x(P)=\alpha\})$, consisting of pairs $(\alpha,y)$ satisfying the equation $y^p - y = f(\alpha, \pi_z(P_i))$, so that $\#W_\alpha=p$.
    The main idea here is to find each $g_j(x)$ by standard univariate polynomial interpolation, which requires finding $g_j(\alpha)$ for at least $\eta-\rho_1+1$ values of $\alpha\in U$.  To do this, note that
   \[g(\alpha,y)=g_0(\alpha)+g_1(\alpha)y+\dots g_{p-\rho_2}(\alpha)y^{p-\rho_2}\] is a univariate polynomial, and its coefficients can be interpolated given the values of $g(\alpha,y)$ on at least $p-\rho_2+1$ distinct $y\in W_{\alpha}$.  
 
   To enable this layered process of interpolation on $g(x,y)$ we need $\eta-(\rho_1-1)$ values of $\alpha \in U$ for which we know the evaluations of $g(\alpha,y)$ on at least $ p - (\rho_2-1)$ points in $W_\alpha$.

    For a fixed $\alpha \in U$, determining $g_j(\alpha)$ for all $j$ will fail only if there are at least $\rho_2$ erasures at indices corresponding to points of the form $(\alpha,y,\pi_z(P_i))\in T$. 
    Since $P_i \in T$, we have $\pi_z(P_i) \in \Gamma$ and thus $\#U \ge \eta$ by definition of $\Gamma$. The next step, determining the coefficients of $g_j(x)$ for each $j$, will fail only if there are at least $\rho_1$ values of $\alpha\in U$ so that we failed to find $g_j(\alpha)$. 
    Thus there must be at least $\rho_1\rho_2$ missing evaluations of $g(x,y)$ in $F_{1,i}$ for the recovery to fail. This guarantees that minimum distance of the middle code is at least $d_1 = \rho_1\rho_2$.

    Finally, we consider the full code.  We prove that the evaluation map is injective for $V$ and $T$, thus determining the dimension of the code.

   Let $\tilde{g}\in V$, let $\gamma_0 \in \Gamma$, and consider $g(x,y) \coloneqq \tilde{g}(x,y,\gamma_0)$.  The total degree of $g$ is at most $\eta-\rho_1 + p - \rho_2$.  The degree of the irreducible plane curve $\calZ_{\gamma_0}$ defined by $y^p-y=f(x,\gamma_0)$ is at most $\nu$, by assumption. By Bezout's theorem, if $g\not \equiv 0$, then $g$ vanishes for at most $s \coloneqq \nu(\eta- \rho_1 + p - \rho_2)$ points in $\calZ_{\gamma_0}$. Since $\deg_z(\tilde{g})\leq \rho_3$, there can be at most $\rho_3$ values of $\gamma\in \Gamma$ for which $\tilde{g}|_{\Zgamma} \equiv 0$.
    By assumption, there are at least $\rho_3+1$ values of $\gamma\in \Gamma$ where $\#\Zgamma(\Fph)\geq \nu( \eta-\rho_1 + p-\rho_2)+1 > s$. Thus there must be at least one value $\gamma_0 \in \Gamma$ for which $g\not \equiv0$, and hence $\tilde{g}$ must be nonzero.    
    This shows the evaluation map is injective, and therefore the dimension of $C(T,V)$ is $k=\dim(V) = (\eta-\rho_1+1)(p-\rho_2+1)(\rho_3+1)$. 
\end{proof}

\begin{remark} \label{rem: mindist} In Theorem \ref{thm: main}, we proved that the evaluation map is injective. This proof can also be used to give a lower bound on the minimum distance of the full code $C$. In particular, we showed that if $\tilde{g} \in V$ is not identically $0$ when restricted to $\mathcal{Z}_{\gamma}$ for $\gamma \in \Gamma$, then $\tilde{g}\mid_{\mathcal{Z}_\gamma}$ vanishes for at most $s = \nu(\eta- \rho_1 + p - \rho_2)$ points of $\mathcal{Z}_\gamma$. Hence it does not vanish on at least $\#\mathcal{Z}_{\gamma}(\F_{p^h})-s$ points. At this level of generality, we are therefore only able to say that the minimum distance of $C$ is $d \ge \#\mathcal{Z}_{\gamma}(\F_{p^h}) - s \ge 1$, which is not a particularly meaningful lower bound. However, for a fixed surface $\calS$, one can give a more precise bound on the number of points on Artin-Schreier curve fibers $\calZ_\gamma$, and hence a non-trivial lower bound on the minimum distance. See Theorem \ref{thm: surface} for an explicit example. 
\end{remark}

\section{Explicit Example}\label{sec:Example}
We now consider the surface $\calS$ defined by 
    \begin{equation} \label{eqn: extremeS} y^p-y=x^{p+1}z^2+x^2z^{p+1},\end{equation} 
where $p$ is an odd prime. 
This surface has Artin-Schreier curves as fibers above $z\neq 0$, and so we will define an evaluation code $C$ with hierarchical locality on a subset of the affine points of $\calS(\F_{p^2})$ using the construction in Section \ref{sec:main theorem}.  Let
\begin{equation} \label{eqn: exT}
    T = \{(x,y,z) \in \calS(\F_{p^2}):  z\neq 0\}.
\end{equation} 
For any integers $2\leq \rho_1,\rho_2\leq p$, let
    \begin{equation} \label{eqn: exV}
        V = V_{\rho_1,\rho_2}=\langle x^iy^jz^k: 0\leq i\leq p-\rho_1, 0\leq j\leq p-\rho_2, 0\leq k \leq p^2-2p\rangle.
    \end{equation}

\subsection{Point counting} In order to prove that $C \coloneqq C(V_{\rho_1, \rho_2},T)$ has hierarchical locality (see Theorem \ref{thm: surface}) we first need to count the number of points on $\calS$ over $\F_{p^2}$. Counting these points and specifying the evaluation set will allow for the computation of the parameters of the code.

\begin{lemma}\label{lem:sum in Fp}
    For all $a\in\F_{p^2}^\times$, we have $\frac{1}{a^{p-1}}+a^{p-1}\in\F_p$.
\end{lemma}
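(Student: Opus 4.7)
The plan is to show that the element $b \coloneqq a^{p-1} + a^{-(p-1)}$ is fixed by the Frobenius $\sigma \colon x \mapsto x^p$ on $\F_{p^2}$, since $\F_p$ is exactly the fixed field of $\sigma$. The key observation, which makes the calculation trivial, is that for any $a \in \F_{p^2}^\times$, the element $c \coloneqq a^{p-1}$ has norm $1$ in the extension $\F_{p^2}/\F_p$. Indeed,
\[
\N_{\F_{p^2}/\F_p}(c) = c \cdot c^p = a^{p-1} \cdot a^{p(p-1)} = a^{p^2-1} = 1,
\]
where the last equality uses that $a$ lies in the multiplicative group of order $p^2 - 1$. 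Consequently $c^p = c^{-1}$, that is, $(a^{p-1})^p = a^{-(p-1)}$.

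With this in hand, the statement of the lemma reduces to recognizing the sum $a^{p-1} + a^{-(p-1)}$ as a trace. Specifically,
\[
\frac{1}{a^{p-1}} + a^{p-1} = c^{-1} + c = c^p + c = \Tr_{\F_{p^2}/\F_p}(c),
\]
and trace maps from $\F_{p^2}$ to $\F_p$ always take values in $\F_p$. Equivalently, one can directly verify that $b^p = (c+c^p)^p = c^p + c^{p^2} = c^p + c = b$, using that $c \in \F_{p^2}$ implies $c^{p^2} = c$.

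There is no real obstacle here; the only thing one needs to notice is that raising to the $(p-1)$-th power sends $\F_{p^2}^\times$ into the norm-one subgroup, after which the rest is automatic from standard properties of Frobenius and the trace.
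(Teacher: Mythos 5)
Your proof is correct and takes essentially the same route as the paper: both verify that the element is fixed by the Frobenius $x \mapsto x^p$, using $a^{p^2-1}=1$ to simplify. Your extra observations — that $a^{p-1}$ has norm $1$ and that the sum is therefore $\Tr_{\F_{p^2}/\F_p}(a^{p-1})$ — are a nice conceptual framing but amount to the same computation.
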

\begin{proof} We claim that $\frac{1}{a^{p-1}}+a^{p-1}\in\F_p$ since it is fixed by the Galois action on $\F_{p^2}$ over $\F_p$. Indeed, we have
\begin{equation*}
\left(\frac{1}{a^{p-1}}+a^{p-1}\right)^p = \frac{1}{a^{p^2-p}}+a^{p^2-p}=\frac{1}{a^{1-p}}+a^{1-p}=a^{p-1}+\frac{1}{a^{p-1}},
\end{equation*}
as required.
\end{proof}
\begin{lemma}\label{lem:num elts of form}
   There are 
   $\frac{p+3}{2}$ elements $u\in\F_p$ that can be written as $u = \frac{1}{a^{p-1}}+a^{p-1}$ for some $a\in\F_{p^2}^\times$.
\end{lemma}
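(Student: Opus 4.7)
The plan is to parametrize the image more cleanly by substituting $b = a^{p-1}$ and identifying the set of such $b$, then counting fibers of the map $b \mapsto b + b^{-1}$.

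First, I would identify the image of the map $a \mapsto a^{p-1}$ on $\F_{p^2}^\times$. Since $(a^{p-1})^{p+1} = a^{p^2-1} = 1$, the image lies in the group $\mu_{p+1}$ of $(p+1)$-th roots of unity in $\F_{p^2}^\times$. Conversely, by the multiplicative form of Hilbert's Theorem 90 applied to $\F_{p^2}/\F_p$, an element $b \in \F_{p^2}^\times$ satisfies $\N_{\F_{p^2}/\F_p}(b) = b^{p+1} = 1$ if and only if $b = c/c^p = c^{1-p}$ for some $c \in \F_{p^2}^\times$, which after taking $a = c^{-1}$ says precisely $b = a^{p-1}$. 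Hence the image is exactly $\mu_{p+1}$, which has $p+1$ elements.

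Next, I would rewrite the expression in a form that makes the symmetry transparent. For $b \in \mu_{p+1}$, we have $b^{-1} = b^p$, so the map
\begin{equation*}
\varphi \colon \mu_{p+1} \to \F_p, \qquad \varphi(b) = b + b^{-1} = b + b^p = \Tr_{\F_{p^2}/\F_p}(b)
\end{equation*}
lands in $\F_p$, recovering Lemma \ref{lem:sum in Fp} in this setting and giving the set we wish to count as the image of $\varphi$.

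Finally, I would count the image by analyzing the fibers of $\varphi$. If $\varphi(b) = \varphi(c)$, the identity $(b-c)(1 - (bc)^{-1}) = 0$ forces either $c = b$ or $c = b^{-1}$, so $\varphi$ is at most $2$-to-$1$, with a fiber of size $1$ precisely when $b = b^{-1}$, i.e. $b \in \{1, -1\}$ (both of which lie in $\mu_{p+1}$ since $p$ is odd). Therefore
\begin{equation*}
\#\varphi(\mu_{p+1}) = 2 + \tfrac{(p+1)-2}{2} = \tfrac{p+3}{2}.
\end{equation*}
The only mild obstacle is ensuring $\varphi$ is exactly $2$-to-$1$ off the two fixed points, which the factorization above handles cleanly, so no serious difficulty is anticipated.
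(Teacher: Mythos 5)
Your proof is correct and follows essentially the same route as the paper: identify the set $\{a^{p-1} : a \in \F_{p^2}^\times\}$ as the $(p+1)$-element kernel of the norm via Hilbert 90, then count the image of $b \mapsto b+b^{-1}$ by observing it is $2$-to-$1$ away from the fixed points $b = \pm 1$ of the involution $b \mapsto b^{-1}$. The only cosmetic difference is that you establish the fiber structure by factoring $(b-c)(1-(bc)^{-1})$, where the paper instead notes that $b$ and $c$ are both roots of $T^2 - uT + 1$; your added observation that $\varphi = \Tr_{\F_{p^2}/\F_p}$ on $\mu_{p+1}$ is a nice touch but not load-bearing.
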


\begin{proof}
    By Hilbert's theorem 90, the elements of the form $a^{p-1}$ are precisely the elements of $\F_{p^2}$ of norm $1$ over $\F_p$.
   Since the norm map $N_{\F_{p^2}/\F_p}$ is a surjective homomorphism on the multiplicative groups, its kernel has size $(p^2-1)/(p-1) = p+1$, hence there are $p+1$ elements of norm $1$ in $\F_{p^2}$. 

    Suppose that $u \in \F_p$ and for some $a^{p-1}$, $b^{p-1}$ in $\F_{p^2}$ we have 
    $$u=a^{p-1}+\frac{1}{a^{p-1}}=b^{p-1}+\frac{1}{b^{p-1}}.$$ 
    Then $a^{p-1}$ and $b^{p-1}$ are both roots of the polynomial $T^2-uT+1$, hence we must have $a^{p-1}=\frac{1}{b^{p-1}}$. Note that $a^{p-1}= \frac{1}{a^{p-1}}$ if and only if  $a^{p-1}=\pm 1$. The remaining $p-1$ elements of $\F_{p^2}$ with norm 1 can be paired as $(a^{p-1},\frac{1}{a^{p-1}})$, giving rise to $\frac{p-1}{2}$ distinct 
    $$u=a^{p-1}+\frac{1}{a^{p-1}}.$$ 

Therefore, in total, we have $2 + \frac{p-1}{2} = \frac{p+3}{2}$ distinct $u \in \F_p$ which can be written in the desired form.
\end{proof}

\begin{prop} \label{lemma: pointcount}
    For the surface $S$ defined in \eqref{eqn: extremeS},
    \begin{equation}\label{eqn:point_count}
        \#S(\F_{p^2}) = p(2(p-1)^3+2(p-1)^2+2p^2-1) =2p^4 - 2p^3 + 2p^2 - p.
    \end{equation}
\end{prop}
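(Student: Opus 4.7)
The plan is to reduce the point count on the surface to counting pairs $(x,z) \in \F_{p^2}^2$ for which the defining equation admits a solution in $y$, then use Lemmas \ref{lem:sum in Fp} and \ref{lem:num elts of form} to handle the non-trivial case.

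First, I would observe that the map $y \mapsto y^p - y$ is an $\F_p$-linear endomorphism of $\F_{p^2}$ whose kernel is $\F_p$ and whose image is exactly the kernel of $\Tr_{\F_{p^2}/\F_p}$ (additive Hilbert 90). Hence, for each fixed $(x,z) \in \F_{p^2}^2$, the fiber $(\pi_{x,z}\mid_\calS)^{-1}(x,z)$ has size $p$ if $\Tr(f(x,z)) = 0$ and size $0$ otherwise. So it suffices to count $\F_{p^2}$-pairs $(x,z)$ with $\Tr(x^{p+1}z^2 + x^2 z^{p+1}) = 0$ and multiply by $p$. A direct computation using $x^{p^2} = x$ gives
\begin{equation*}
\Tr(x^{p+1}z^2 + x^2 z^{p+1}) = x^{p+1}\Tr(z^2) + z^{p+1}\Tr(x^2) = \N(x)\Tr(z^2) + \N(z)\Tr(x^2),
\end{equation*}
which lies in $\F_p$ since all four factors do.

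Next, I would count the solution pairs $(x,z)$ by cases. If $x = 0$, then $\N(x) = \Tr(x^2) = 0$ and every $z$ works, giving $p^2$ pairs; symmetrically, $z = 0, x \neq 0$ contributes $p^2 - 1$ pairs. For the remaining case $x, z \in \F_{p^2}^\times$, I would divide the equation by $\N(x)\N(z)$ to rewrite it as $h(x) = -h(z)$, where
\begin{equation*}
h(a) \coloneqq \frac{\Tr(a^2)}{\N(a)} = a^{p-1} + a^{-(p-1)}.
\end{equation*}
By Lemma \ref{lem:sum in Fp}, $h$ takes values in $\F_p$, and by the argument of Lemma \ref{lem:num elts of form}, the image $A = h(\F_{p^2}^\times)$ has size $\frac{p+3}{2}$. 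I would analyze fibers of $h$ via the substitution $s = a^{p-1}$, which is $(p-1)$-to-one onto the norm-one subgroup $G$ of order $p+1$; then $h(a) = s + 1/s$, so solving $s^2 - \alpha s + 1 = 0$ in $G$ shows the fiber $\#h^{-1}(\alpha)$ equals $p-1$ when $\alpha = \pm 2$ and equals $2(p-1)$ for the remaining $\frac{p-1}{2}$ values of $\alpha \in A$.

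Finally, to count pairs with $h(z) = -h(x)$, I would use the symmetry that $s \mapsto -s$ is a bijection on $G$ (valid since $p$ is odd) and sends $\phi(s) = s + 1/s$ to $-\phi(s)$, so $A$ is closed under negation and $\#h^{-1}(-\alpha) = \#h^{-1}(\alpha)$. Therefore
\begin{equation*}
\#\{(x,z) \in (\F_{p^2}^\times)^2 : h(x) + h(z) = 0\} = \sum_{\alpha \in A} \#h^{-1}(\alpha)^2 = 2(p-1)^2 + \frac{p-1}{2}(2(p-1))^2 = 2p(p-1)^2.
\end{equation*}
Adding the three contributions and multiplying by $p$ yields $p(2p(p-1)^2 + 2p^2 - 1) = p(2(p-1)^3 + 2(p-1)^2 + 2p^2 - 1)$, which expands to $2p^4 - 2p^3 + 2p^2 - p$. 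The main obstacle is the bookkeeping in the last case: one must correctly identify the sizes of the fibers of $h$ over $\pm 2$ versus other elements of $A$ and verify the negation symmetry on $G$, after which the arithmetic collapses cleanly.
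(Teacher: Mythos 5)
Your proof is correct and follows essentially the same route as the paper: reduce via additive Hilbert~90 to counting $(x,z)$ with $\Tr(x^{p+1}z^2+x^2z^{p+1})=0$, peel off the $2p^2-1$ pairs with $xz=0$, and for $x,z\neq 0$ rewrite the condition as $h(x)=-h(z)$ with $h(a)=a^{p-1}+a^{-(p-1)}$, then count using Lemma~\ref{lem:num elts of form}. The one place you are actually slightly more careful than the paper is in explicitly verifying that the value set $A$ of $h$ is closed under negation (via $-1$ lying in the norm-one subgroup, since $p$ is odd), which is needed to justify that every $u\in A\setminus\{\pm 2\}$ genuinely contributes solutions on both sides; the paper's proof uses this fact implicitly when it cites Lemma~\ref{lem:num elts of form} for the ``$\frac{p-1}{2}$ additional values of $u$.'' Your fiber-squared sum $\sum_{\alpha\in A}\#h^{-1}(\alpha)^2 = 2p(p-1)^2$ is an equivalent reorganization of the paper's count-by-$u$ argument, which tallies $(p-1)^2$ each for $u=\pm 2$ and $4(p-1)^2$ for each of the remaining $\frac{p-1}{2}$ values.
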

\begin{proof}
    We again apply additive Hilbert's Theorem 90 to conclude that there exists $y\in\F_{p^2}$ with $y^p-y=x^{p+1}z^2+x^2z^{p+1}$ if and only if $\Tr_{\F_{p^2}/\F_p}(x^{p+1}z^2+x^2z^{p+1})=0$.  This yields
    \begin{align*}
        \Tr_{\F_{p^2}/\F_p}(x^{p+1}z^2+x^2z^{p+1}) 
        &=(x^{p+1}z^2+x^2z^{p+1})^p+(x^{p+1}z^2+x^2z^{p+1}) \stepcounter{equation}\tag{\theequation}\label{myeq1} \\
        &=x^{p^2+p}z^{2p}+z^{p^2+p}x^{2p}+x^{p+1}z^2+x^2z^{p+1}\\
        &=x^{p+1}(z^{2p}+z^2)+z^{p+1}(x^{2p}+x^2).
    \end{align*}
    This quantity certainly equals 0 when either $x=0$ or $z=0$. There are $2p^2-1$ such pairs of values $(x,z) \in \F_{p^2} \times \F_{p^2}$.

    If both $x\neq 0$ and $z\neq 0$, $\Tr_{\F_{p^2}/\F_p}(x^{p+1}z^2+x^2z^{p+1})=0$ if and only if 
        \begin{equation}\label{eqn:simplified}
            \frac{1}{x^{p-1}}+x^{p-1}=-\frac{1}{z^{p-1}}-z^{p-1}.
        \end{equation} 

Now, for each $u \in \F_p$, consider those pairs $(x,z)$ that satisfy 
\begin{equation} \label{eqn: tracezero}
   \frac{1}{x^{p-1}}+x^{p-1}=-\frac{1}{z^{p-1}}-z^{p-1} = u.  
\end{equation}


When $u = 2$, as noted in the proof of Lemma \ref{lem:num elts of form}, this implies $x^{p-1} = -z^{p-1} = 1$. There are $p-1$ distinct values of $x$ such that $x^{p-1} = 1$ and $p-1$ distinct values of $z$ with $z^{p-1} = -1$, giving rise to $(p-1)^2$ pairs $(x,z)$. The same argument gives $(p-1)^2$ pairs $(x,z)$ when $u = -2$.

By Lemma \ref{lem:num elts of form}, there are $\frac{p-1}{2}$ additional values of $u \in \F_p$ for which equation~\eqref{eqn: tracezero} has solutions. 
Moreover, for a fixed $u$, the solutions to equation~\eqref{eqn: tracezero} come in sets of four
\[\left\{(\alpha,\beta),(\alpha,\beta^{-1}),(\alpha^{-1},\beta),(\alpha^{-1},\beta^{-1})\right\}.\]
For any given $\alpha,\beta$, the equations $x^{p-1}=\alpha$ and $z^{p-1}=\beta$ each have $p-1$ solutions in $\F_{p^2}$. Thus there are $4(p-1)^2$ pairs of values $(x,z)$ for each of these $\frac{p-1}{2}$ elements $u$. Summing over all $u$, we have $2(p-1)^3+2(p-1)^2$ total pairs $(x,z)$ with both $x\neq 0$ and $z\neq 0$.  
  
Finally, for each of these pairs $(x,z)$ with $\Tr(x^{p+1}z^2+x^2z^{p+1})=0$ there are $p$ different $y$-values, yielding  \[ \#S(\F_{p^2}) = p(2(p-1)^3+2(p-1)^2+2p^2-1) =2p^4 - 2p^3 + 2p^2 - p,\]
as required.
\end{proof}

\subsection{The evaluation code} We can now prove that the evaluation code $C(V_{\rho_1, \rho_2}, T)$ has hierarchical locality. Recall that the surface $\calS$ has defining equation $y^p-y=x^{p+1}z^2+x^2z^{p+1}$, the subset of points  is $T = \{(x,y,z) \in \calS(\F_{p^2}):  z\neq 0\}$, and for integers $2\leq \rho_1,\rho_2\leq p$, we consider the functions $V = V_{\rho_1,\rho_2}=\langle x^iy^jz^k: 0\leq i\leq p-\rho_1, 0\leq j\leq p-\rho_2, 0\leq k \leq p^2-2p\rangle.$ 
\begin{theorem} \label{thm: surface}
    The code $C(V_{\rho_1,\rho_2},T)$ has hierarchical locality. 
    In particular,
    \begin{enumerate}
        \item the lower code has length $n_2 = p$, dimension $k_2 \leq s_2=p-\rho_2+1$ and minimum distance at least $ d_2=\rho_2$
        \item the middle code has length $n_1\leq 2p^2-p$, dimension $k_1 \leq s_1=(p-\rho_2+1)(p-\rho_1+1)$, and minimum distance at least $d_1=\rho_2\rho_1$, and 
        \item the full code has length $n =2p^4-3p^3+2p^2-p$, dimension $k =(p-\rho_2+1)(p-\rho_1+1)(p^2-2p+1)$ and minimum distance $d\geq (\rho_1+\rho_2-3)p+(\rho_1+\rho_2)$.
    \end{enumerate}
\end{theorem}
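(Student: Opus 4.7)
The plan is to apply Theorem \ref{thm: main} with parameters $\eta = p$, $\nu = p+1$, and $\rho_3 = p^2 - 2p$, then use Remark \ref{rem: mindist} to derive a sharper minimum distance bound specific to this surface. The degree condition $\nu = p+1 \ge \deg(\mathcal{Z}_\gamma)$ is immediate from the defining equation, and condition (2) of Theorem \ref{thm: main} matches our hypotheses on $\rho_1, \rho_2, \rho_3$.

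To characterize $\Gamma$, I would revisit the trace analysis from Proposition \ref{lemma: pointcount}. Setting $t = \gamma^{p-1}$ and $u = -(t + t^{-1})$, the simplified equation $\tfrac{1}{x^{p-1}} + x^{p-1} = u$ is solvable in $\F_{p^2}^{\times}$ iff $u$ lies in the image set of Lemma \ref{lem:num elts of form}. The identity $u^2 - 4 = (t - t^{-1})^2$, together with the observation that $t - t^{-1}$ lies in the trace-zero line $\{\beta \in \F_{p^2}: \beta^p = -\beta\}$ (using that $\N_{\F_{p^2}/\F_p}(t) = 1$), shows $u^2 - 4$ is either zero (when $\gamma^{p-1} = \pm 1$) or a non-square in $\F_p^{\times}$ (otherwise). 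In both subcases the equation has solutions, giving $\#\pi_x(\mathcal{Z}_\gamma(\F_{p^2})) = p$ or $2p - 1$ respectively. Thus $\Gamma = \F_{p^2}^{\times}$ and $\eta = p$ works. Counting the $\gamma$ with $\gamma^{p-1} \neq \pm 1$ gives $(p^2 - 1) - 2(p-1) = (p-1)^2$, and for these, $\#\mathcal{Z}_\gamma(\F_{p^2}) = p(2p - 1) = 2p^2 - p$; a routine check confirms $2p^2 - p \ge (p+1)(2p - \rho_1 - \rho_2) + 1$ for $\rho_1, \rho_2 \ge 2$, so condition (3) holds with $\rho_3 + 1 = (p-1)^2$.

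Theorem \ref{thm: main} then delivers the claimed $s_2, s_1, d_2, d_1$, and $k$. The lengths follow directly: $n_2 = p$ (each lower fiber has $p$ points); $n_1 \le 2p^2 - p$ (from the point count above); and $n = \#T = \#\calS(\F_{p^2}) - p^3 = 2p^4 - 3p^3 + 2p^2 - p$, using Proposition \ref{lemma: pointcount} and the observation that the $z = 0$ locus of $\calS$ contributes $p \cdot p^2 = p^3$ points, since $y^p = y$ forces $y \in \F_p$ with $x$ unconstrained.

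For the minimum distance, I would apply Remark \ref{rem: mindist} using a $\gamma$ of maximum point count. The key pigeonhole: any nonzero $\tilde{g} \in V$ has $\deg_z(\tilde{g}) \le p^2 - 2p$, so at most $p^2 - 2p$ values of $\gamma$ yield $\tilde{g}(x, y, \gamma) \equiv 0$, yet there are $(p-1)^2 = p^2 - 2p + 1$ values of $\gamma$ with $\gamma^{p-1} \neq \pm 1$; hence at least one such $\gamma$ gives a nonzero restriction, on whose curve Bezout yields at least $(2p^2 - p) - (p+1)(2p - \rho_1 - \rho_2) = (\rho_1 + \rho_2 - 3)p + (\rho_1 + \rho_2)$ nonzero evaluations. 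The main obstacle is justifying Bezout; this reduces to the irreducibility of $\mathcal{Z}_\gamma$ (since $\gamma^2 x^{p+1} + \gamma^{p+1} x^2$ has degree $p+1$ coprime to $p$ and so cannot be of the form $g(x)^p - g(x)$) combined with the $y$-degree bound $\deg_y(\tilde{g}) \le p - \rho_2 < p$.
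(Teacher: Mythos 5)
Your proof is correct and follows essentially the same route as the paper: apply Theorem~\ref{thm: main} with $\eta = p$, $\nu = p+1$, $\rho_3 = p^2 - 2p$, then tighten the minimum-distance bound via Remark~\ref{rem: mindist}. Your discriminant analysis (showing $u^2 - 4 = (t - t^{-1})^2$ lies in the square of the trace-zero line, hence is zero or a non-square in $\F_p$) is a cleaner and more self-contained justification of the claim $\#\pi_x(\mathcal{Z}_\gamma(\F_{p^2}))\in\{p,2p-1\}$ than the paper's appeal back to the proof of Proposition~\ref{lemma: pointcount}; you also correctly supply the irreducibility of $\mathcal{Z}_\gamma$ (via $\gcd(p+1,p)=1$) and the bound $\deg_y \tilde g < p$ needed to invoke Bezout, details the paper leaves implicit. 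The one spot you should spell out is the converse direction of your discriminant criterion: that $u^2-4$ zero or a non-square forces the roots of $T^2-uT+1$ to be norm-$1$ elements of $\F_{p^2}$ (zero discriminant gives the double root $\pm 1$; non-square discriminant gives a Galois-conjugate pair whose product is the constant term $1$). Alternatively, note directly that $-t$ also has norm $1$ since $p+1$ is even, so $u = (-t) + (-t)^{-1}$ is manifestly in the image of Lemma~\ref{lem:num elts of form}. With that small addition the argument is complete.
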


\begin{proof}
We will apply Theorem \ref{thm: main}. To do so, we will show that it suffices to take $\eta = p$, $\nu = p+1$, and $\rho_3 = p^2 - 2p$. First,  as noted in the proof of Lemma \ref{lemma: pointcount}, if $\gamma^{p-1}=\gamma^{1-p}$, then there are $p-1$ nonzero values of $x$  with $\Tr_{\F_{p^2}/\F_p}(\gamma^2x^{p+1}+\gamma^{p+1}x^2)=0$ otherwise, there are $2(p-1)$ such $x$-values.
    In addition, if we include the case when $x=0$, we find
    \begin{equation} \label{eqn: numxvalues}
        \#\pi_x(\mathcal{Z}_\gamma(\F_{p^2}))\in\{p,2p-1\}.
    \end{equation}
So we may take $\Gamma = \{\gamma \in \F_{p^2} \setminus \{ 0 \}: \#\pi_x(\mathcal{Z}_\gamma(\F_{p^2})) \ge p\}$ and $\eta = p$.  Since for all $\gamma \in \Gamma$, we have $\deg(\mathcal{Z}_\gamma)  \le p+1$ we may take $\nu = p +1$. 

Now we verify that it suffices to take $\rho_3 = p^2 - 2p$. We will show that there are at least $\rho_3 + 1 = p^2 - 2p +1$ values of $\gamma \in \Gamma$ such that \begin{equation}
     \# \mathcal{Z}_\gamma(\F_{p^2}) \ge 2p^2 - p > (p+1)(p-2 + p-2) +1  \ge \nu(\eta - \rho_1 + p - \rho_2) + 1.
\end{equation} By the discussion above and the proof of Lemma~\ref{lemma: pointcount} we note that \begin{equation} \#\{\gamma\in\Gamma: \#\mathcal{Z}_{\gamma}(\F_{p^2})\geq 2p^2 - p\} \end{equation} represents the number of nonzero $\gamma \in \F_{p^2}$ such that $\gamma^{p-1} \ne \gamma^{1-p}$. Indeed, there are $p^2 -1$ nonzero $\gamma \in \F_{p^2}$ and the number of solutions to $\gamma^{p-1} = \gamma^{1-p}$ is $2(p-1)$, coming from $p-1$ solutions when $\gamma^{p-1} = 1$ and $p-1$ when $\gamma^{p-1} = -1$. Hence the total number of desirable $\gamma$ is $p^2 - 1 - 2(p-1) = p^2 - 2p + 1$. 

Finally, we verify the parameters of the lower, middle, and full codes. Let $i$ denote the position in the code corresponding to the point $P_i \in S(\F_{p^2})$ and let $\gamma_i \coloneqq \pi_z(P_i)$.

(1) Recall that the lower code $C_{2,i}$ represents the punctured code given by retaining the positions in the code vectors corresponding to the points in the fiber 
$F_{2,i} = (\pi_{x,z}\mid_\calS)^{-1}(\pi_{x,z}(P_i))$ which is a set of size $p$. Hence $C_{2,i}$ has length $n_2 = p$, and the dimension and minimum distance are immediate consequences of the proof of Theorem \ref{thm: main}

(2) The middle code $C_{1,i}$ is given by retaining the positions in the code vectors corresponding to the points in the evaluation set with the same $z$-coordinate as $P_i$. These points can be described as the $\F_{p^2}$-points on the curve $\mathcal{Z}_{\gamma_i}$. By equation \eqref{eqn: numxvalues}, such a curve can have at most $p(2p-1) = 2p^2 - p$ points, hence the length of $C_{1,i}$ at most $2p^2 - p$. The upper bound on the dimension and the lower bound on the minimum distance are proved in Theorem \ref{thm: main}. 

(3) The length of the full code $C$ is $n =  \#S(\F_{p^2}) - p^3 = 2p^4 - 3p^3 - 2p - 2$ by Proposition \ref{lemma: pointcount} and the fact that there are $p^3$ points on $S$ with $z =0$. The dimension is immediate from the definition of $V_{\rho_1, \rho_2}$. Remark \ref{rem: mindist} following the proof of Theorem \ref{thm: main} shows that we can obtain the following lower bound on the minimum distance,
\begin{align*}d &\ge \#\mathcal{Z}_\gamma(\F_{p^2}) - \nu(\eta- \rho_1 + p - \rho_2) \\
& \ge  (2p^2-p)-(p+1)(2p-(\rho_1+\rho_2)) \\
 &=(\rho_1+\rho_2-3)p+(\rho_1+\rho_2).
 \end{align*} 
Since $\rho_1, \rho_2 \ge 2$ by assumption, this shows that $C$ has a positive, nontrivial minimum distance.
\end{proof}

\begin{remark}
Note that this construction gives a maximum asymptotic information rate of about $\frac{1}{2}$.  The relative minimum distance does not approach 1 for any choice of $\rho_1$, and $\rho_2$. If a code with high relative minimum distance were desired, we could alter the original construction to trade off rate and minimum distance.  Here, we would let $\rho_1,\rho_2, \rho_3$ be integers with $2\leq \rho_1,\rho_2\leq p$, $2\leq \rho_3\leq p^2-2p$, and let \[V_{\rho_1,\rho_2, \rho_3}=\langle x^iy^jz^k: 0\leq i\leq p-\rho_1, 0\leq j\leq p-\rho_2,  0\leq k \leq p^2-2p-\rho_3\rangle.\] Defining the corresponding code $C(T,V_{\rho_1,\rho_2,\rho_3})$ would let us choose parameters to take smaller rate in exchange for large relative minimum distance. We do not explicitly determine the parameters for this construction here but it is possible using the techniques of this paper.
\end{remark}

\begin{remark}
    
One can also consider the surfaces \begin{equation} \label{eqn: altsurface} \calS_\lambda : y^p -y = x^{p+1}z^2 + x^2z^{p+1} - \lambda
\end{equation} for $\lambda \in \F_{p^2}$. If $\Tr(\lambda) = 0$, then the number of $\F_{p^2}$ rational points of $\calS_\lambda$ and the results of Theorem~\ref{thm: surface} are unchanged; indeed, the curves are isomorphic by a translation of $\lambda$ on the $y$-axis. However, a computation in \texttt{Magma} \cite{magma} shows that for all $p \le 50$ if $\Tr(\lambda) \ne 0$, then for any $\gamma \in \F_{p^2}^\times$ the Artin-Schreier curve \begin{equation} \calZ_{\gamma} : y^p - y = \gamma^2x^{p+1} + \gamma^{p+1} x^2 -\lambda \end{equation} 
has $\#\pi_x(\mathcal{Z}_\gamma(\F_{p^2}))\in\{0, p-1, 2p\}$ and thus $\# \calZ_\gamma(\F_{p^2}) \in \{ 0, p^2-p, 2p^2 \}$. Moreover, \begin{equation} \#\calS_\lambda(\F_{p^2})  \in \{(p^2-p)(p-1)^2, (p^2-p)(p-1)^2 + 2p^2(p-1) \}.
\end{equation}
We may take $\Gamma = \{ \gamma \in \F_{p^2}^\times :\#\pi_x(\mathcal{Z}_\gamma(\F_{p^2})) \ge p-1 \}$. This forces $\eta = p-1$, and as before, $\nu = p+1$. Since $2p^2 > (p+1)(p-1-2+p-2) \ge \nu(\eta - \rho_1 + p - \rho_2)$ and again a computation for $p \le 50$ gives $\#\{\gamma \in \Gamma: \calZ_\gamma(\F_p^2) \ge \nu(\eta - \rho_1 + p - \rho_2) \} \ge p-1$ in this case we get $\rho_3 \ge p-2$. That is, the evaluation code $C(T, V_{\rho_1, \rho_2})$ with evaluation set  and space of functions \begin{eqnarray} T&=&\{(x,y,z)\in \calS(\F_{p^2}): z\in\Gamma\}  \\
 V_{\rho_1, \rho_2}& =& \langle x^i y^j z^k : 0 \le i \le p - \rho_1 -1, 0 \le j \le p-\rho_2, 0\le k \le p-2 \rangle, \end{eqnarray} respectively, satisfies the hypotheses of Theorem \ref{thm: main} with $\Gamma, \eta, \nu$, and $\rho_3$ as specified above. Thus $C(T,V)$ has hierarchical recovery with lower codes supported on the fibers of the Artin-Schreier cover, middle codes supported on the curves $\calZ_\gamma$, and parameters as in Theorem \ref{thm: main}.  This is a promising direction for further exploration.
\end{remark} 

\section{A related example} \label{sec:RelatedExample}  We now describe a code with hierarchical locality for an Artin-Schreier surface which does not meet the conditions of Theorem \ref{thm: main}.  This example is valuable in illustrating that the difficult point counting in Section \ref{sec:Example} depends deeply on the equation of the surface, and that applying the same basic ideas to different Artin-Schreier surfaces can generate codes with very different parameters and properties.
Let $p$ be an odd prime. Let $\calS$ be the smooth affine surface defined by $y^p - y = x^{p+1} z^{p+1}$ over $\F_{p^2}$, where smoothness follows from the fact that $\partial_y$ is always nonzero. We consider an evaluation code $C$ on this surface and determine its parameters. In contrast to the examples from Section \ref{sec:Example}, however, the rational points on $\calS$ are not well-distributed, so we will not apply Theorem \ref{thm: main} directly, but will still construct a code with hierarchical recovery.

\begin{lemma} \label{lem: ptcount1stex}
    Let $\calS$ be the smooth affine surface defined over $\F_{p^2}$ by $y^p-y = x^{p+1}z^{p+1}$. Then $\#\calS(\F_{p^2}) = p^3 + p^3 - p = 2p^3 - p$. 
\end{lemma}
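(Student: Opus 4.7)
The plan is to mirror the proof of Proposition~\ref{lemma: pointcount}: use the additive form of Hilbert's Theorem 90 to reduce the point count to a trace computation over $\F_{p^2}/\F_p$, then use properties of the norm map to simplify the trace.

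First I would observe that for each pair $(x,z) \in \F_{p^2}^2$ such that $\Tr_{\F_{p^2}/\F_p}(x^{p+1}z^{p+1})=0$, there are exactly $p$ values of $y \in \F_{p^2}$ satisfying $y^p - y = x^{p+1}z^{p+1}$, and for the remaining pairs there are none. So the task reduces to counting the pairs on which this trace vanishes.

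Next I would simplify the trace. Since $x^{p+1} = \N_{\F_{p^2}/\F_p}(x)$ and $z^{p+1} = \N_{\F_{p^2}/\F_p}(z)$ both lie in $\F_p$, their product $x^{p+1}z^{p+1}$ also lies in $\F_p$. For $u \in \F_p$, the trace map acts simply as $\Tr_{\F_{p^2}/\F_p}(u) = 2u$. Hence
\begin{equation*}
\Tr_{\F_{p^2}/\F_p}(x^{p+1}z^{p+1}) = 2\,x^{p+1}z^{p+1}.
\end{equation*}
Since $p$ is odd, $2$ is invertible in $\F_p$, so the trace vanishes if and only if $x^{p+1}z^{p+1}=0$, which (as $\F_{p^2}$ is an integral domain) happens exactly when $x=0$ or $z=0$.

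Finally I would count by inclusion-exclusion: the pairs with $x=0$ contribute $p^2$ choices of $z$, the pairs with $z=0$ contribute $p^2$ choices of $x$, and the overlap $(0,0)$ is counted once. So the number of valid $(x,z)$ is $2p^2-1$, and each contributes $p$ values of $y$, giving $\#\calS(\F_{p^2}) = p(2p^2-1) = 2p^3 - p$. There is no real obstacle here; the entire argument is a short direct calculation, and the only subtlety to flag is that the drastic collapse of the trace (into a single monomial in $x^{p+1}z^{p+1}$) is exactly what makes the points of $\calS$ concentrate on the coordinate planes $x=0$ and $z=0$, as foreshadowed in the opening paragraph of Section~\ref{sec:RelatedExample}.
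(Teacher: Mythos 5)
Your proposal is correct and follows essentially the same route as the paper: additive Hilbert 90 reduces the count to the trace condition, the norm identity $x^{p+1}=\N_{\F_{p^2}/\F_p}(x)\in\F_p$ collapses the trace to $2x^{p+1}z^{p+1}$, and inclusion--exclusion on the two coordinate planes gives $2p^3-p$. The only (cosmetic) difference is that you count the $(x,z)$ pairs first and multiply by $p$ at the end, whereas the paper counts the $p^3$ points on each plane $x=0$ and $z=0$ directly and subtracts the $p$-point overlap.
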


\begin{proof} The additive version of Hilbert's Theorem 90 tells us that $c \in \F_{p^2}$ has trace zero if and only if $c = \gamma^p - \gamma$ for some $\gamma \in \F_{p^2}$. Thus $y^p - y= x^{p+1}z^{p+1}$ if and only if $\Tr(x^{p+1}z^{p+1}) = 0$. Since $N_{\F_{p^2}/\F_p}(\alpha) = \alpha \alpha^p = \alpha^{p+1} \in \F_p$,  we have $\Tr(x^{p+1}z^{p+1}) = 2x^{p+1}z^{p+1}$, which vanishes precisely when $x = 0$ or $z = 0$. When $x = 0$, there are $p^2$ possible choices for $z$, and there are $p$ solutions to $y^p - y = 0$, i.e., the elements of $\F_p$. Thus from $x = 0$ we find $p^3$ points in $\calS(\F_{p^2})$. The same argument holds swapping the roles of $x$ and $z$. This includes the $p$ points on $\calS$ with $x = z = 0$ twice, thus in total, we find $\#\calS(\F_{p^2}) = p^3 + p^3 - p = 2p^3 - p$. 
\end{proof}

\subsection{An evaluation code} We now consider an evaluation code on the (affine) points of $\calS$.

\begin{theorem} \label{thm: secondsurface} Let $\rho_1$ and $\rho_2$ be positive integers such that $2\leq\rho_2\leq p$ and $p\leq\rho_1\leq p^2$. Let $V := \langle y^i x^j, y^k z^\ell \mid 0 \le i,k \le p-\rho_2, 0 \le j, \ell \le p^2 -\rho_1 \rangle$.  The code $C \coloneqq C(\calS(\F_{p^2}),V)$ is a locally recoverable code with hierarchical locality with lower codes supported on the fibers of the projection $\pi_{x,z} \colon \calS \to \A^2_{x,z}$ and middle codes supported on the curves arising as the intersection of $\calS$ with the plane $x=0$ or $z=0$. In particular, 
\begin{enumerate}
\item The lower codes have length $n_2 = p$, dimension at most $s_2 = p-\rho_2$ and minimum distance at least $d_2 = \rho_2$.
\item The middle codes have length $n_1=p^3$, dimension at most $s_1=(p^2-\rho_1+1)(p-\rho_2+1)$, and minimum distance at least $d_1 = \min(\rho_1 \rho_2, \, p(\rho_1+\rho_2)-p^2)$.
\item The full code has length $n=2p^3-p$ and dimension $k=2p^3-2p^2(\rho_2-1)-p(2\rho_1-1)+(2\rho_1-1)(\rho_2-1)$, and minimum distance $d\geq \min(\rho_1 \rho_2, \, p(\rho_1+\rho_2)-p^2)$.
\end{enumerate}
\end{theorem}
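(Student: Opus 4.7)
The plan is to exploit the very special structure of $\calS(\F_{p^2})$ established in Lemma~\ref{lem: ptcount1stex}: since $\Tr_{\F_{p^2}/\F_p}(x^{p+1}z^{p+1}) = 2x^{p+1}z^{p+1}$ vanishes exactly when $x = 0$ or $z = 0$, every $\F_{p^2}$-rational point of $\calS$ lies in one of the two coordinate planes $\{x = 0\}$ or $\{z = 0\}$, and on either plane the defining equation reduces to $y^p - y = 0$, forcing $y \in \F_p$. Consequently, the evaluation set decomposes as two copies of $\F_p \times \F_{p^2}$ glued along the $y$-axis $\{x = z = 0\}$, which already gives the length $n = 2p^3 - p$ in (3). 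I would then write each $\tilde g \in V$ uniquely as $\tilde g(x,y,z) = c(y) + A(x,y) + B(z,y)$, where $c(y)$ collects the ``pure $y$'' terms shared by both generating sets, $A(x,y) = \sum_{j=1}^{p^2-\rho_1} a_j(y) x^j$ and $B(z,y) = \sum_{\ell=1}^{p^2-\rho_1} b_\ell(y) z^\ell$, with each of $c, a_j, b_\ell \in \F_{p^2}[y]$ of degree $\le p - \rho_2$. A basis count gives $\dim V = (p-\rho_2+1)\bigl(2(p^2-\rho_1)+1\bigr)$, which expands to the $k$ in (3). To show $\dim C = \dim V$, I would prove injectivity by restricting to $z = 0$: for each $y_0 \in \F_p$, the polynomial $c(y_0) + A(x, y_0)$ has $\deg_x < p^2$ and must vanish on all of $\F_{p^2}$, hence identically in $x$; peeling off the $x^0$ coefficient gives $c(y_0) = 0$, and since $\deg c < p$, $c \equiv 0$; similarly each $a_j \equiv 0$. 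The slice $x = 0$ kills $B$ analogously.

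For the lower codes, each nonempty fiber of $\pi_{x,z}|_\calS$ consists of exactly $p$ points (a single $\F_p$-translate in the $y$-coordinate), and $\tilde g$ restricts to a polynomial in $y$ of degree $\le p - \rho_2$, yielding length $p$, dimension bounded as claimed, and minimum distance $\ge \rho_2$ by standard univariate interpolation. For a middle code on, say, $\calS \cap \{z = 0\}$, the rational points form $p$ disjoint affine lines indexed by $y_0 \in \F_p$, giving length $p^3$. The restriction is $h_1(x,y) = c(y) + A(x,y)$, a bivariate polynomial of bidegree at most $(p^2 - \rho_1, p - \rho_2)$, so the dimension is bounded by $(p^2 - \rho_1 + 1)(p - \rho_2 + 1)$. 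If $h_1 \not\equiv 0$, then the set of $y_0 \in \F_p$ with $h_1(\cdot, y_0) \equiv 0$ has size at most $p - \rho_2$ (it lies in the roots of some nonzero coefficient polynomial in $y$ of degree $\le p - \rho_2$), so at least $\rho_2$ ``live'' rows remain; on each of these, the univariate polynomial in $x$ has $\le p^2 - \rho_1$ roots and therefore $\ge \rho_1$ non-zeros. This yields $d_1 \ge \rho_1 \rho_2$.

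For the full code minimum distance, I would partition $\calS(\F_{p^2})$ by $y$-coordinate: for each $y_0 \in \F_p$, the subset $P_{y_0}$ is the union of the two lines $\{(x, y_0, 0) : x \in \F_{p^2}\}$ and $\{(0, y_0, z) : z \in \F_{p^2}\}$, which meet only at $(0, y_0, 0)$, and $\tilde g|_{P_{y_0}}$ is encoded by the two univariate polynomials $f_1(x) = c(y_0) + A(x, y_0)$ and $f_2(z) = c(y_0) + B(z, y_0)$ of degree $\le p^2 - \rho_1$ sharing constant term $c(y_0)$. A short case analysis on whether each of $f_1, f_2$ is identically zero — with careful bookkeeping at the single shared point — shows that whenever $\tilde g|_{P_{y_0}} \not\equiv 0$ it contributes at least $\rho_1$ non-zeros on $P_{y_0}$. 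The number of $y_0 \in \F_p$ for which $\tilde g|_{P_{y_0}} \equiv 0$ is at most $p - \rho_2$, since $\tilde g \neq 0$ forces one of $c, a_j, b_\ell$ to be a nonzero polynomial of degree $\le p - \rho_2$ whose roots in $\F_p$ contain all such ``bad'' $y_0$. Multiplying gives $d \ge \rho_1 \rho_2$, and the hypotheses $2 \le \rho_2 \le p \le \rho_1 \le p^2$ imply $(\rho_1 - p)(\rho_2 - p) \le 0$, i.e.\ $\rho_1 \rho_2 \le p(\rho_1 + \rho_2) - p^2$, so the stated minimum bound follows. The main obstacle will be the case analysis on each $P_{y_0}$ together with the bookkeeping near the overlap point $(0, y_0, 0)$; the presence of a shared point prevents a clean tensor-product-style argument and requires tracking whether $c(y_0)$ vanishes.
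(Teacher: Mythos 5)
Your proposal is correct, and it reaches the stated parameters by a route that differs meaningfully from the paper's, most notably in how it bounds the minimum distances of the middle and full codes. For the middle code on $Z = \calS \cap \{z=0\}$, the paper treats the plane curves $Z$ and $V(g,z)$ as degree-$p$ and degree-$(p^2-\rho_1+p-\rho_2)$ curves, splits into cases according to whether they share irreducible components, and applies Bezout's theorem in the non-shared case (yielding $p(\rho_1+\rho_2)-p^2$ nonzeros) versus a component-by-component count in the shared case (yielding $\rho_1\rho_2$ nonzeros); the final $d_1$ is the minimum of the two. You instead slice $Z(\F_{p^2})$ directly into the $p$ rows $y = y_0$, $y_0 \in \F_p$, bound the number of ``dead'' rows by the $\le p-\rho_2$ roots of a nonzero coefficient polynomial in $y$, and bound the zeros on each live row by $\deg_x g \le p^2-\rho_1$, giving the uniform bound $d_1 \ge \rho_1\rho_2$ without invoking Bezout at all. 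You also correctly note that the hypotheses $p \le \rho_1$ and $\rho_2 \le p$ force $(\rho_1-p)(\rho_2-p)\le 0$, hence $\rho_1\rho_2 \le p(\rho_1+\rho_2)-p^2$, so the paper's $\min$ always evaluates to $\rho_1\rho_2$ and your uniform bound matches the claim. For the full code, the paper observes $\tilde g$ cannot vanish on the union of both planes (since $xz \nmid \tilde g$), picks one plane where the restriction is nonzero, and reuses the middle-code bound there; you instead partition all of $\calS(\F_{p^2})$ by $y$-coordinate into the ``crosses'' $P_{y_0}$ and run a case analysis on the pair $(f_1,f_2)$ with explicit bookkeeping at the shared point $(0,y_0,0)$. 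Both arguments are sound and give the same bound; the paper's is shorter, while yours is more elementary, avoids the Bezout machinery and the component case split, and treats the two-plane geometry symmetrically rather than discarding half the evaluation set. The decomposition $\tilde g = c(y)+A(x,y)+B(z,y)$, the injectivity argument by restricting to $z=0$ and $x=0$, and the resulting dimension count $(p-\rho_2+1)\bigl(2(p^2-\rho_1)+1\bigr)$ agree with the paper's $2(p-\rho_2+1)(p^2-\rho_1+1)-(p-\rho_2+1)$.
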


\begin{proof} Much of the proof will follow a similar argument to that of Theorem \ref{thm: main}. However, since the space of polynomials $V$ defined above is a proper subset of the vector space used in Theorem \ref{thm: main}, some modifications are needed. As in Theorem \ref{thm: main}, a position $i$ of a vector in the code $C$ corresponds to the evaluation of polynomials in $V$ at a fixed point $P_i \in S(\F_{p^2})$.

(1)  The lower code $C_{2,i}$ is the punctured code given by retaining the position in the code vectors corresponding to points in $S(\F_{p^2})$ with the same $x$- and $z$-coordinates as $P_i$, i.e., points with either $x$-coordinate or $z$-coordinate equal to $0$. Thus if $g \in V$, then the restriction of $g$ to this set of points yields a univariate polynomial in $y$ of degree at most $p - \rho_2$. As in the proof of Theorem \ref{thm: main}, this means we have local recovery with locality $p-(\rho_2 - 1)$ and minimum distance at least $d_2 = \rho_2$ so that $C_{2,i}$ is a $(s_2, d_2)$-locally recoverable code.

(2) There are two middle codes, one supported on $V(y^p-y - x^{p+1}z^{p+1}, x)$ and the other on $V(y^p-y - x^{p+1}z^{p+1}, z)$. Each has length $p^3$, which represents the number of $\F_{p^2}$-rational points on these curves. We will consider the middle code supported on $Z \coloneqq V(y^p-y - x^{p+1}z^{p+1}, z)$, since the argument for the other is identical. Let $\tilde{g} \in V$ and assume that its restriction to $Z$ is not identically zero. We denote this function by $g(x,y)$. To determine a lower bound on the minimum distance, we consider the maximum number of zeros $g$ can have on $Z$. Observe that the degree of $g$ is at most $p^2 - \rho_1 + p - \rho_2$ and the degree of $Z$ is $p$. Let $\calC_g$ be the curve defined by $V(g(x,y), z)$. 

There are two cases, depending on whether $\calC_g$ shares an irreducible component with $Z$ or not. In the case when it does not, then by Bezout's theorem, there are at most $p^3+p^2-p(\rho_1+\rho_2)$ zeros of $g$ on $Z$.  Since $Z(\F_{p^2}) = p^3$, there must be at least $p(\rho_1+\rho_2) - p^2$ points where $g$ does not vanish.  
In the case where the two curves share irreducible components, we may still give a bound on the number of zeros of $g$ on $Z$. Since $\deg_y(g) \le p-\rho_2$, the largest number of irreducible components $\calC_g$ can share with $Z$ is $p - \rho_2$. Each such component has $p^2$ rational points (since $y$ and $z$ are fixed, but $x$ varies over all elements of $\F_{p^2}$). On the remaining $\rho_2$ irreducible components of $Z$, there can be at most $\deg_x(g) \le p^2 - \rho_1$ points. Thus, in total, the maximum number of zeros $g$ can have on $Z$ is $(p-\rho_2)p^2 + \rho_2(p^2 - \rho_1) = p^3 - \rho_1\rho_2$, and thus there must be at least $\rho_1\rho_2$ points on which $g$ does not vanish. The minimum distance is therefore at least $d_1 = \min ( \rho_1 \rho_2, \, p(\rho_1+\rho_2) - p^2)$, which is positive since $\rho_1\rho_2 \ge 2p > 0$ and $\rho_1+\rho_2\geq 2+p > p$. This also gives injectivity of the evaluation map on the points of $Z$, and thus the dimension of the middle code is at most $s_1 = (p^2-\rho_1+1)(p-\rho_2+1)$, which follows from the definition of $V$.
    
(3) The length of the full code $C$ is equal to $\#\calS(\F_{p^2}) = 2p^3 - p$ by Lemma \ref{lem: ptcount1stex}. The minimum distance of $C$ can be bounded below as follows. Suppose that $\tilde{g}\in V$. Note that $\tilde{g}$ cannot be the zero function when restricted to the union of the planes $x=0$ and $z=0$ since then $xz \mid \tilde{g}$, which is impossible since $\tilde{g} \in V$.
Without loss of generality, assume that its restriction to $Z$ is not identically zero. Then the argument in (2) above shows that the minimum distance of the full code is $d\geq \min ( \rho_1 \rho_2, \, p(\rho_1+\rho_2) - p^2)$.  Since $\rho_1\rho_2 \ge 2p > 0$ and $\rho_1+\rho_2\geq 2+p > p$, this also implies that the evaluation map of $V$ on points of $\calS(\F_{p^2})$ is injective, so the dimension of the code is the same as that of $V$, namely $k = \dim(V) = 2(p-\rho_2+1)(p^2-\rho_1+1) - (p-\rho_2+1)$. \end{proof}

\begin{subsection}{Applying Theorem \ref{thm: main} to a related family }
Let $\lambda \in \F_{p^2}$ and consider the surface \begin{equation} \label{eqn: Slambda} \calS_{\lambda} : y^p - y  = x^{p+1}z^{p+1} - \lambda.\end{equation} If $\Tr(\lambda) = 0$, then Theorem \ref{thm: secondsurface} applies unchanged. If $\Tr(\lambda) \ne 0$, then the rational points on $\calS_\lambda$ are better distributed and we can apply Theorem \ref{thm: main} to construct an evaluation code with hierarchical locality. As always, we must first count points. 

\begin{lemma} Let $\gamma \in \F_{p^2}^\times$ such that $\Tr(\lambda) \ne 0$. Then the curve $\calZ_{\gamma}: y^p - y = x^{p+1}\gamma^{p+1} - \lambda$ has $\#\calZ_\gamma(\F_{p^2}) = p(p+1)$. Moreover, the surface $S_\lambda$ defined by equation \eqref{eqn: Slambda} has $\#\calS_\lambda(\F_{p^2}) = p(p+1)(p^2-1) = p^4 + p^3 - p^2 - p$.
\end{lemma}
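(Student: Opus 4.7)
The plan is to reduce both point counts to applications of additive Hilbert 90, exploiting the fact that $x^{p+1}$ and $\gamma^{p+1}$ are norms from $\F_{p^2}$ to $\F_p$, hence lie in $\F_p$. First I would count points on the curve $\calZ_\gamma$. For each fixed $x \in \F_{p^2}$, the equation $y^p - y = x^{p+1}\gamma^{p+1} - \lambda$ has $p$ solutions $y \in \F_{p^2}$ if the right-hand side has trace zero, and no solutions otherwise. Since $N_{\F_{p^2}/\F_p}(x) = x^{p+1}$ and $N_{\F_{p^2}/\F_p}(\gamma) = \gamma^{p+1}$ both lie in $\F_p$, their product is fixed by Frobenius, so
\[
\Tr(x^{p+1}\gamma^{p+1} - \lambda) = 2x^{p+1}\gamma^{p+1} - \Tr(\lambda).
\]

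Next I would solve the trace-zero condition. Under the hypothesis $\Tr(\lambda) \ne 0$ and $\gamma \ne 0$, the condition $\Tr(x^{p+1}\gamma^{p+1} - \lambda) = 0$ is equivalent to $x^{p+1} = \Tr(\lambda)/(2\gamma^{p+1})$, which is a fixed nonzero element of $\F_p$. Since the norm map $N_{\F_{p^2}/\F_p}\colon \F_{p^2}^\times \to \F_p^\times$ is a surjective homomorphism with kernel of size $(p^2-1)/(p-1) = p+1$, there are exactly $p+1$ values of $x$ satisfying this. Each contributes $p$ values of $y$, so $\#\calZ_\gamma(\F_{p^2}) = p(p+1)$.

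For the surface, I would split according to whether $z = 0$ or $z \ne 0$. When $z = 0$, the defining equation becomes $y^p - y = -\lambda$, which has no $\F_{p^2}$-solutions by additive Hilbert 90 since $\Tr(-\lambda) = -\Tr(\lambda) \ne 0$. (By symmetry, the slice $x = 0$ is also empty, but this is already accounted for in the $z \ne 0$ count below.) For each $z = \gamma \in \F_{p^2}^\times$, the corresponding slice of $\calS_\lambda$ is exactly $\calZ_\gamma$. Since these slices partition $\calS_\lambda(\F_{p^2})$, summing over the $p^2 - 1$ nonzero values of $\gamma$ gives
\[
\#\calS_\lambda(\F_{p^2}) = (p^2 - 1) \cdot p(p+1) = p^4 + p^3 - p^2 - p,
\]
as claimed.

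There is no real obstacle; the only thing to check carefully is that $x^{p+1}\gamma^{p+1} \in \F_p$ so that its trace is simply twice itself, and that $\Tr(\lambda) \ne 0$ forces the empty fiber above $z = 0$. Both are immediate, making this a clean computation rather than a difficult estimate.
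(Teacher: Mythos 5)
Your proposal is correct and follows the same underlying strategy as the paper: reduce to the trace-zero condition via additive Hilbert~90, use that $x^{p+1}\gamma^{p+1}\in\F_p$ to simplify the trace to $2x^{p+1}\gamma^{p+1}-\Tr(\lambda)$, and then count the $x$ with $x^{p+1}$ equal to a fixed element of $\F_p^\times$. The one place you diverge is in that final count: the paper invokes a criterion for when $x^m-c$ has a root in $\F_q$ (namely, $\gcd(m,q-1)\mid (q-1)/\mathrm{ord}(c)$) together with the fact that $\F_{p^2}$ contains the $(p+1)$st roots of unity, whereas you observe directly that $x\mapsto x^{p+1}$ is the norm map $\F_{p^2}^\times\to\F_p^\times$, a surjective group homomorphism whose fibers are cosets of a kernel of size $p+1$. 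Your version is arguably cleaner, since it avoids appealing to a separate root-existence criterion and gets the fiber size in one step. You also make explicit that the slice $z=0$ (equivalently $x=0$) contributes no rational points because $\Tr(-\lambda)\ne 0$; the paper leaves this tacit when passing from the curve count to the surface count, so spelling it out is a small but worthwhile improvement in rigor.
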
 
\begin{proof} 
To determine the number of points on the curve $\calZ_\gamma$, we must consider the solutions to $\Tr(x^{p+1}\gamma^{p+1} - \lambda) = 0$, which is equivalent to determining the roots  of $2\gamma^{p+1} x^{p+1} = \lambda^p + \lambda$ in $\F_{p^2}$. Since $\gamma \ne 0$ and $\Tr(\lambda) \ne 0$, this gives $x^{p+1} = \frac{\lambda^p + \lambda}{2\gamma^{p+1}}$, where the quantity on the right hand side is an element of $\F_p$. Recall that an equation of the form $x^m - c$ has a root in a finite field $\F_q$ if and only if $\gcd(m, q-1)$ divides $(q-1)/\text{ord}(c)$. In our case, $\gcd(p+1, p^2-1) = p+1$, and $(p^2 - 1)/\text{ord}(c)$ is a multiple of $p+1$ for any $c \in \F_p^\times$ since $\text{ord}(c) \mid p-1$. Moreover, the polynomial $x^{p+1} - c$ splits completely in $\F_{p^2}$ since $\F_{p^2}$ contains the $(p+1)$st roots of unity. Thus the curves $\calZ_\gamma$ have $p(p+1)$ points over $\F_{p^2}$ for all $\gamma \in \F_{p^2}^\times$ and the surface $S_\lambda$ therefore has $p(p+1)(p^2-1)$ points over $\F_{p^2}$.  
\end{proof} 
\begin{theorem}
Let $\lambda \in \F_{p^2}$ with $\Tr(\lambda) \ne 0$ and let $\calS_\lambda$ be defined as in equation \eqref{eqn: Slambda}. Let  \[ T = \{(x,y,z) \in \calS_\lambda(\F_{p^2}):  z\neq 0\},\] and for integers $2\leq \eta\leq p+1$, $2\leq \rho_1 \le \eta$, $2 \le \rho_2\leq p$, with $\eta \le \rho_1 + \rho_2$, we consider the functions \[ V = V_{\rho_1,\rho_2}=\langle x^iy^jz^k: 0\leq i\leq \eta-\rho_1, 0\leq j\leq p-\rho_2, 0\leq k \leq p^2-2\rangle.\] Then the code $C(T, V_{\rho_1, \rho_2})$ has hierarchical locality with hierarchical parameters $n_1=p^2+p$, $s_1=(\eta-\rho_1+1)(p-\rho_2+1)$, and $d_1=\rho_1\rho_2$, $n_2=p$, $s_2=p-\rho_2+1$, and $d_2=\rho_2$.  The full code has length $n=p(p+1)(p^2-1) = p^4 + p^3 - p^2 - p$ and dimension $k=(\eta-\rho_1+1)(p-\rho_2+1)(p^2-1)$.
\end{theorem}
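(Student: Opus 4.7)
The plan is to apply Theorem~\ref{thm: main} to $\calS_\lambda$ with the auxiliary choices $\nu = p+1$ and $\rho_3 = p^2-2$. I would begin by computing the length $n$: since $\Tr(\lambda) \neq 0$, the additive form of Hilbert's Theorem~90 shows that $y^p - y = -\lambda$ has no root in $\F_{p^2}$, so $\calS_\lambda$ has no point with $z = 0$. Therefore $T = \calS_\lambda(\F_{p^2})$ and $\#T = p(p+1)(p^2-1) = p^4+p^3-p^2-p$ by the preceding lemma, matching the claimed length.

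Next I would identify the set $\Gamma$ of Theorem~\ref{thm: main}. From the proof of the preceding lemma, for every $\gamma \in \F_{p^2}^\times$ the curve $\calZ_\gamma$ is governed by $2\gamma^{p+1}x^{p+1} = \Tr(\lambda) \in \F_p^\times$; since $x \mapsto x^{p+1}$ is the norm $\F_{p^2}^\times \to \F_p^\times$, which is $(p+1)$-to-one, this has exactly $p+1$ solutions in $\F_{p^2}$. Hence $\#\pi_x(\calZ_\gamma(\F_{p^2})) = p+1 \geq \eta$ for every $\gamma \neq 0$, so I may take $\Gamma = \F_{p^2}^\times$, giving $\#\Gamma = p^2-1$. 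Note that the two definitions of $T$ (from the theorem statement and from Theorem~\ref{thm: main}) then coincide.

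With these choices I would check the three numerical hypotheses of Theorem~\ref{thm: main}: (1) $\deg(\calZ_\gamma) \leq p+1 = \nu$ is immediate; (2) the ranges on $\rho_1$, $\rho_2$, and $\rho_3 = p^2-2$ lie within the allowed intervals; (3) for each $\gamma \in \Gamma$, $\#\calZ_\gamma(\F_{p^2}) = p(p+1)$, and the bound $\#\calZ_\gamma(\F_{p^2}) \geq \nu(\eta - \rho_1 + p - \rho_2) + 1$ rearranges to $(p+1)(\rho_1+\rho_2-\eta) \geq 1$, which is delivered by the hypothesis $\eta \leq \rho_1+\rho_2$ together with the integrality of $\rho_1+\rho_2-\eta$. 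Since the bound then holds for all $\rho_3+1 = p^2-1$ elements of $\Gamma$, condition~(3) is met. The main obstacle in this check is that the point-count inequality is tight, so I would verify carefully that the parameter constraints on $\eta$ leave no slack for an off-by-one error.

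Applying Theorem~\ref{thm: main} then produces a code with hierarchical recovery, with lower codes supported on fibers of $\pi_{x,z}\colon \calS_\lambda \to \A^2_{x,z}$ and middle codes supported on the curves $\calZ_\gamma$. The hierarchical parameters read off directly: $d_2 = \rho_2$, $d_1 = \rho_1\rho_2$, $s_2 = p-\rho_2+1$, $s_1 = (\eta-\rho_1+1)(p-\rho_2+1)$, $n_1 = \#\calZ_\gamma(\F_{p^2}) = p^2+p$, and $n_2 = p$. The full-code dimension equals $(\eta-\rho_1+1)(p-\rho_2+1)(\rho_3+1) = (\eta-\rho_1+1)(p-\rho_2+1)(p^2-1)$, completing the match with the statement.
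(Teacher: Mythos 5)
Your proposal follows exactly the route the paper takes: apply Theorem~\ref{thm: main} to $\calS_\lambda$ with $\nu = p+1$, $\rho_3 = p^2-2$, and $\Gamma = \F_{p^2}^\times$, using the preceding lemma's count $\#\calZ_\gamma(\F_{p^2}) = p(p+1)$ for every nonzero $\gamma$. Your added observation that $\Tr(\lambda)\neq 0$ forces $\calS_\lambda$ to have no points with $z=0$, so that $T$ is all of $\calS_\lambda(\F_{p^2})$ and the length is simply $\#\calS_\lambda(\F_{p^2})$, is a useful clarification that the paper leaves implicit.

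However, the off-by-one worry you raise at the end is not a false alarm, and your earlier sentence claiming the needed inequality ``is delivered by the hypothesis $\eta\le\rho_1+\rho_2$ together with the integrality of $\rho_1+\rho_2-\eta$'' is not correct at the boundary. As you computed, hypothesis~(3) of Theorem~\ref{thm: main} requires $\#\calZ_\gamma(\F_{p^2}) \ge \nu(\eta-\rho_1+p-\rho_2)+1$, which rearranges to $(p+1)(\rho_1+\rho_2-\eta)\ge 1$, i.e., $\eta\le\rho_1+\rho_2-1$. The weaker constraint $\eta\le\rho_1+\rho_2$ only gives $\rho_1+\rho_2-\eta \ge 0$. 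When $\eta = \rho_1+\rho_2$, one finds $\nu(\eta-\rho_1+p-\rho_2) = (p+1)p = p^2+p = \#\calZ_\gamma(\F_{p^2})$ exactly, so the Bezout bound inside the proof of Theorem~\ref{thm: main} no longer forces a non-vanishing point of $\tilde g$ on some $\calZ_\gamma$, and injectivity of the evaluation map (hence the dimension claim) is not established by this argument. The paper's own proof has the same slip: it verifies $\#\calZ_\gamma(\F_{p^2}) \ge \nu(\eta-\rho_1+p-\rho_2)$ and silently drops the $+1$. The safe fix is to tighten the hypothesis to $\eta < \rho_1+\rho_2$ (equivalently $\eta\le\rho_1+\rho_2-1$), which costs nothing in the examples of interest; otherwise one needs a separate argument that injectivity survives the edge case.
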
 

\begin{proof} We apply Theorem \ref{thm: main} with $\nu = p+1$, $\rho_3 = p^2 - 2$, and $\Gamma = \{ \gamma \in \F_{p^2}^\times : \# \pi_x(\calZ_\gamma(\F_{p^2})) \ge \eta \}$. Since the degree of the curve $\calZ_\gamma$ is at most $ p+1$, it suffices to take $\nu = p+1$. Since $\#\pi_x(\calZ_\gamma(\F_{p^2})) =p+1$ for all $\gamma \ne 0$, we may take $\eta \le p+1$. By assumption, $\eta \le \rho_1 + \rho_2$, thus it suffices to take $\rho_3 = p^2 - 2$ since there are $\rho_3 + 1 = p^2 - 1$ values of $\gamma \in \F_{p^2}^\times$ such that \begin{equation} \# \calZ_\gamma(\F_{p^2}) = p^2 + p \ge (p+1)(\eta - \rho_1 + p - \rho_2) = \nu(\eta - \rho_1 + p - \rho_2). 
\end{equation}  The parameters of the code follow immediately from Theorem \ref{thm: main}.
\end{proof}
\end{subsection}

\section{Conclusions and Next Directions}\label{sec:Conclusion}
In this paper, we use geometric structures in Artin-Schreier surfaces to define HLRCs and look closely at two examples.  Artin-Schreier surfaces were good candidates for this exploration because the natural degree-$p$ cover of $\mathbb{P}^2$ is unramified on the affine points of the curve, and the arithmetic structure of the fibers of this map are well-understood.  These fibers are just the intersections of lines of constant $x$ and $z$ with the surface, and they always have $p$ points of the form $(a, b+y,c)$ where $y$ varies over $\F_p$.  Similarly, the intersections of these surfaces with planes of constant $z$ will often be Artin-Schreier curves, which have a particularly nice arithmetic structure and are amenable to point counting.  There are many other families of Artin-Schreier curves that may provide better parameters.  In addition, the examples in this paper have been restricted to codes defined over $\F_{p^2}$.  The number of points and arithmetic structure of these surfaces over larger fields may also yield interesting parameter ranges.  Further, the geometric intuition employed here would apply to any surface that can be intersected with planes and lines in a similar fashion.  For example, degree $m$ cyclic covers of $\mathbb{P}^2$ of the form $y^m=f(x,z)$, where $\gcd(m,p)=1$, may provide other interesting examples, with middle codes given by Kummer curves.


\end{document}